\documentclass[12pt]{amsart}
\usepackage{amscd,amsmath,amsthm,amssymb}
\usepackage[left]{lineno}
\usepackage{color}
\usepackage{stmaryrd}
\usepackage[utf8]{inputenc}
\usepackage{cleveref}
\usepackage{graphicx}
\usepackage{epstopdf}

\usepackage{xcolor}
\usepackage[misc,geometry]{ifsym}
\usepackage{tikz}
\usepackage{subcaption}

\definecolor{verylight}{gray}{0.97}
\definecolor{light}{gray}{0.9}
\definecolor{medium}{gray}{0.85}
\definecolor{dark}{gray}{0.6}

 \usepackage{fullpage}

 %
 %
 %
 \def\NZQ{\mathbb}               

 \def\ZZ{{\NZQ Z}}

 \def\KK{{\NZQ K}}
 
 %
 %

 %

 \def\G{{\mathcal G}}

 %

 \def\0b{{\mathbf 0}}

 \def\opn#1#2{\def#1{\operatorname{#2}}} 
 %
 \opn\ara{ara}
 \opn\chara{char}
 \opn\length{\ell}
 \opn\pd{pd}
 \opn\rk{rk}
 \opn\projdim{proj\,dim}
 \opn\injdim{inj\,dim}
 \opn\rank{rank}
 \opn\depth{depth}
 \opn\grade{grade}
 \opn\height{height}
 \opn\embdim{emb\,dim}
 \opn\codim{codim}
 
 \opn\Tr{Tr}
 \opn\bigrank{big\,rank}
 \opn\superheight{superheight}
 \opn\lcm{lcm}
 \opn\trdeg{tr\,deg}
 \opn\reg{reg}
 \opn\lreg{lreg}
 \opn\ini{in}
 \opn\lpd{lpd}
 \opn\size{size}
 \opn\sdepth{sdepth}
 \opn\link{link}
 \opn\fdepth{fdepth}\opn\lex{lex}
 \opn\tr{tr}
 \opn\type{type}
 \opn\gap{gap}
 \opn\arithdeg{arith-deg}
 \opn\HS{HS}
 \opn\GL{GL}
 \opn\supp{supp}
\opn\St{St}

 %
 \opn\div{div} \opn\Div{Div} \opn\cl{cl} \opn\Cl{Cl}
 %
 %
 \opn\Spec{Spec} \opn\Supp{Supp} \opn\supp{supp} \opn\Sing{Sing}
 \opn\Ass{Ass} \opn\Min{Min}\opn\Mon{Mon}
 %
 %
 \opn\Ann{Ann} \opn\Rad{Rad} \opn\Soc{Soc}\opn\Deg{Deg}
 %
 %
 \opn\Im{Im} \opn\Ker{Ker} \opn\Coker{Coker} \opn\Am{Am}
 \opn\Hom{Hom} \opn\Tor{Tor} \opn\Ext{Ext} \opn\End{End}
 \opn\Aut{Aut} \opn\id{id}
 
 \opn\nat{nat}
 \opn\pff{pf}
 \opn\Pf{Pf} \opn\GL{GL} \opn\SL{SL} \opn\mod{mod} \opn\ord{ord}
 \opn\Gin{Gin} \opn\Hilb{Hilb}\opn\sort{sort}
 \opn\PF{PF}\opn\Ap{Ap}
 \opn\mult{mult}
 \opn\bight{bight}
 %
 %
 \opn\aff{aff}
 \opn\relint{relint} \opn\st{st}
 \opn\lk{lk} \opn\cn{cn} \opn\core{core} \opn\vol{vol}  \opn\inp{inp} \opn\nilpot{nilpot}
 \opn\link{link} \opn\star{star}\opn\lex{lex}\opn\set{set}
 \opn\width{wd}
 \opn\Fr{F}
 \opn\QF{QF}
 \opn\G{G}
 \opn\type{type}\opn\res{res}
 \opn\conv{conv}
 \opn\Ind{Ind}
 \opn\gr{gr}
 
 %
 %
 
 \def\pot#1#2{#1[\kern-0.28ex[#2]\kern-0.28ex]}

 %
 %
 \opn\dirlim{\underrightarrow{\lim}}
 \opn\inivlim{\underleftarrow{\lim}}
 %
 %
 %

 %
 %

 \def\Implies{\ifmmode\Longrightarrow \else
         \unskip${}\Longrightarrow{}$\ignorespaces\fi}
 \def\implies{\ifmmode\Rightarrow \else
         \unskip${}\Rightarrow{}$\ignorespaces\fi}
 \def\iff{\ifmmode\Longleftrightarrow \else
         \unskip${}\Longleftrightarrow{}$\ignorespaces\fi}

 \let\:=\colon
 \newtheorem{Theorem}{Theorem}[section]
 \newtheorem{Lemma}[Theorem]{Lemma}

 \newtheorem{Remark}[Theorem]{Remark}
 
 \newtheorem{Example}[Theorem]{Example}
 
 \newtheorem{Definition}[Theorem]{Definition}

 %
 \let\epsilon\varepsilon
 \let\kappa=\varkappa
 %
 %
 \textwidth=15cm \textheight=22cm \topmargin=0.5cm
 \oddsidemargin=0.5cm \evensidemargin=0.5cm \pagestyle{plain}
 %
 %
 \def\qed{\ifhmode\textqed\fi
       \ifmmode\ifinner\quad\qedsymbol\else\dispqed\fi\fi}
 \def\textqed{\unskip\nobreak\penalty50
        \hskip2em\hbox{}\nobreak\hfil\qedsymbol
        \parfillskip=0pt \finalhyphendemerits=0}
 \def\dispqed{\rlap{\qquad\qedsymbol}}
 
 %
 \opn\dis{dis}
 \def\pnt{{\raise0.5mm\hbox{\large\bf.}}}
 
 \opn\Lex{Lex}

 


 \begin{document}

\title{The $\circ$ operation and $*$   operation of    Cohen-Macaulay bipartite graphs}

\author{Yulong Yang,  Guangjun Zhu$^{\ast}$,  Yijun Cui and Shiya Duan}


\address{School of Mathematical Sciences, Soochow University, Suzhou, Jiangsu, 215006, P. R. China}

\email{ zhuguangjun@suda.edu.cn(Corresponding author:Guangjun Zhu),	\linebreak[4]
		1975992862@qq.com(Yulong Yang), 237546805@qq.com(Yijun Cui),3136566920@qq.com(Shiya Duan).}

\thanks{$^{\ast}$ Corresponding author}

\thanks{2020 {\em Mathematics Subject Classification}.
Primary 13C15, 13A15, 13D02; Secondary 05E40.}

\thanks{Keywords:   Regularity,  depth,  $\circ$ operation, $*$ operation,  Cohen-Macaulay bipartite graphs.}

\maketitle
\begin{abstract}
Let $G$  be a  finite simple graph  with the vertex set $V$ and let $I_G$ be its edge ideal in the  polynomial ring $S=\KK[x_V]$. In this paper,  we compute the depth and the Castelnuovo--Mumford regularity of
$S/I_G$ when   $G=G_1\circ G_2$ or $G=G_1* G_2$ is a graph obtained from Cohen-Macaulay bipartite graphs $G_1$, $G_2$ by $\circ$ operation or $*$ operation, respectively.
\end{abstract}

\section{Introduction}
Let $G$ be a simple graph on the vertex set $V(G)$ without any isolated vertices.
Let $\KK$ be  a field and  $S:=\KK[V(G)]=\KK[v|v\in V(G)]$  a polynomial ring with $\deg(v)=1$.   The edge ideal of $G$, denoted by $I_G$, is defined as $I_G:=(uv \mid  \{u, v\}\in E(G))$, where  $E(G)$ is the set of all edges of $G$. Over the past two decades, many authors established connections between the combinatorial properties of $G$ and the algebraic properties of $I_G$  (see
for example \cite{FHM,FM,HT1,Ka,KM,W,Zhu1,Zhu2,ZCYY}).

Let $G$ be a graph  with the  edge set $E(G)$.  A subset $M\subset E(G)$ is a {\em matching} of $G$ if $e\cap e'=\emptyset$  for any
pair of edges $e, e'\in M$. A matching is a  {\em maximal  matching} if it has the greatest possible number of edges. The {\em matching number} of $G$, denoted by $\alpha(G)$, is the   maximum size of a maximal matching in $G$. The {\em minimum matching number} of $G$, denoted by $\beta(G)$, is the  minimum size of a  maximal matching in $G$. An {\em induced matching} in $G$ is a matching $M$ such that the induced subgraph of $G$ over the vertices of $M$ does not contain any edge other than those already in $M$.
The {\em induced matching number} of $G$, denoted by $\vartheta(G)$, is the maximum size of an induced matching in $G$.
 By \cite{Ka} and \cite{W}, we know that for any graph $G$,
\[
\vartheta(G)\le \reg(S/I_G)\le \beta(G).
\]
The first inequality  becomes equality in the following cases: (a) $G$ is a chordal graph; (b) $G$  is a weakly chordal graph; (c) $G$  is a sequentially
Cohen-Macaulay bipartite graph;  (e)  $G$ is a unmixed bipartite graph;   (e)  $G$ is a very well-covered graph;
(f)  $G$ is a $C_5$-free vertex decomposable graph; (g)  $G$   is a $(C_4,C_5)$free vertex decomposable graph (see \cite{BC,HT1,KM,K,MMCRTY,T,W}).

For a graph $G$ with vertex set $V(G)$.
For a vertex  $x$ in $V(G)$, the subgraph $\St(x)$ of $G$ with vertex set $N_G[x]$ and edge set $\{\{x,y\} |y \in N_G(x)\}$ is called a {\em star with center} $x$. A star {\em packing} of $G$
is a family $\mathcal{X}$ of stars in $G$ which are pairwise disjoint, i.e., $V(\St(x))\cap V(\St(x'))=\emptyset$,
for $\St(x), \St(x') \in  \mathcal{X}$ with $x \ne x'$. The quantity $\max\{|\mathcal{X}|\mid \mathcal{X} \text{ is a star packing of }G\}$
is called the {\em star packing number} of $G$, denoted by $\gamma(G)$. Fouli et al. in \cite{FHM} showed  that
\[
\depth(S/I_G)\ge \gamma(G).
\]

Let  $u,v\in V(G)$.  The distance of  $u$ and $v$, denoted by $d(u, v)$, is the length of the shortest path between $u$ and $v$. If $G$ is connected, then
the diameter of $G$ is $d(G)=\max\{d(u, v) | u, v \in V\}$.
 Fouli and Morey in \cite{FM} showed  that if  a graph $G$ has $p$ connected components, then
\[
\depth(S/I_G)\ge  \sum\limits_{i=1}^{p}\lceil\frac{d_i+1}{3}\rceil
\]
where $\lceil\frac{d_i+1}{3}\rceil$ is the smallest integer $\ge \frac{d_i+1}{3}$ and $d_i$ is the diameter of the $i$-th connected component of $G$.  The second author of this paper in \cite{Zhu1} proved that
if $G$ is a path, then $\depth(S/I_G)$ can reach this lower bound.
Morey  et al. in \cite{MV} showed that for a connected  bipartite graph $G$ with $n$ vertices, then
  \[
\depth(S/I_G)\le \lfloor\frac{n}{2}\rfloor
\]
where $\lfloor\frac{n}{2}\rfloor$ is the largest integer $\le \frac{n}{2}$. The second author  of this paper in \cite{Zhu1} and \cite{Zhu2} provided  some exact formulas for the depth and  regularity of the edge ideals of path graphs and cycle graphs respectively.

The first three authors of this article in \cite{ZCYY}  studied two family of simple graphs  obtained from some  fan graphs by the $*$ operation and the $\circ$ operation, respectively. For such two graphs, we gave some formulas for the depth and  regularity of $S/I_G$.

In this article, we are interested in  algebraic properties of  depth and  regularity  of $S/I_G$ if $G$ is a  graph obtained from two  Cohen-Macaulay bipartite graphs by the $\circ$ operation or the $*$ operation.

The article is organized as follows. In section \ref{sec:prelim}, we will recall some basic definitions and terminology that we will need later.  In sections \ref{sec:study},   we will study the depth and regularity  of a  bipartite graph obtained from  a Cohen-Macaulay bipartite graph by deleting its one leaf.
We give some exact formulas for the depth and regularity  of the  edge ideal of such a graph. In sections \ref{sec:operation},
we will study some   graphs  obtained from  Cohen-Macaulay bipartite graphs by the $\circ$ operation or the  $*$ operation. For such graphs, we give some exact formulas for the depth and regularity  of their  edge ideals.

\section{Preliminary}
\label{sec:prelim}

In this section, we gather together the needed definitions and basic facts, which will
be used throughout this paper. However, for more details, we refer the reader to \cite{BM,HH1,V}.

Let $G=(V(G),E(G))$ be a finite simple (no loops, no multiple edges) graph, where $V(G)$  and  $E(G)$ are the vertex set  and edge set of  $G$, respectively. Sometimes for short we denote  $V(G)$ and  $E(G)$
by $V$ and $E$ respectively. The {\em neighborhood}  of a vertex $v$ in $G$ is defined as  $N_G(v)=\{u\,|\, \{u,v\}\in E(G)\}$
 and its degree, denoted by $\deg_G(v)$, is $|N_G(v)|$. If $|N_G(v)|=1$, then  $v$ is called a leaf.
Set $N_G[v]=N_G(v)\cup\{v\}$.  For $A\subset V(G)$, $G|_A$ denotes the {\em induced subgraph} of $G$ on  the  set $A$, i.e., for $i,j \in A$, $\{i,j\} \in E(G|_A)$ if and only if $\{i,j\}\in E(G)$. For $W\subseteq V(G)$, we denote by $G\backslash W$ the induced subgraph
of  $G$  on $V(G) \setminus W$. For a vertex $v\in V(G)$,  we denote by $G\backslash v$  the induced subgraph of $G$ on the  set $V(G)\backslash \{v\}$ for simplicity.

A {\em walk} of length $(n-1)$ in a graph $G$ is an alternating sequence of vertices and edges $w=\{v_1, z_1, v_2,\ldots, v_{n-1}, z_{n-1}, v_{n}\}$, where $z_i=\{v_{i}, v_{i+1}\}$ is the edge
joining $v_i$ and $v_{i+1}$. A walk is closed if $v_1=v_{n}$. A walk may also be denoted $\{v_1,\ldots, v_n\}$, the edges being evident by context. A {\em cycle} of length $n$ is a
closed walk, in which the points $v_1,\ldots,v_n$ are distinct. We denote  the graph consisting of a cycle with $n$ vertices by $C_n$. A {\em path} is a walk with all the points distinct. For simplicity, a {\em path} with $n$ vertices, denoted $P_n$, is a walk with
the vertex set $[n]$ and edge set $\{\{1,2\},\{2,3\},\ldots,\{n-1,n\}\}$, and the length of $P_n$ is defined
to be $n-1$.
Any graph isomorphic to $P_n$ is also called a path.

In the sequel, let $S_+$ be the unique graded maximal ideal of the standard
graded algebra $S$. The local cohomology modules of a finitely generated
graded $S$-module $M$ with respect to $S_+$ are denoted by $H_{S_+}^i(M)$
for $i\in \ZZ$.

\begin{Definition}
    Let $M$ be a finitely generated graded $S$-module.
    \begin{enumerate}
        \item[(a)] The \emph{depth} of $M$ is defined as
            \[
                \depth(M):= \min\{i: H_{S_+}^i(M)\ne 0\}.
            \]
        \item For $i = 0, \dots , \dim(M)$, the $i$\textsuperscript{th} \emph{$a$-invariant} of $M$ is defined as
            \[
                a_i(M):=\max\{t : (H_{S_+}^i(M))_t \ne 0\}
            \]
            with the convention that $\max \emptyset = -\infty$.
        \item The \emph{Castelnuovo--Mumford regularity} of $M$ is defined as
            \[
                \reg(M):=\max\{a_i(M) + i:0\le i\le \dim(M)\}.
            \]
    \end{enumerate}
\end{Definition}
A graph $G$ is called  Cohen-Macaulay (abbreviated as C-M) if the  quotient ring $S/I_G$ is Cohen-Macaulay, i.e., $\depth(S/I_G)=\dim(S/I_G)$.
For  a proper non-zero homogeneous ideal $I$ in $S$, it is known that $\reg(S/I)=\reg(I)-1$.

The following lemmas are often used  to compute  the depth and regularity of a module.
 In particular, since the facts in Lemma \ref{lem:direct_sum} are well-known, they will be used implicitly in this paper.

\begin{Lemma}
	\label{lem:direct_sum}
	Let $M,N$ be two  finitely generated graded $S$-modules. Then,
	\begin{itemize}
		\item[(1)] 
		$\depth(M\oplus N)=\min\{\depth(M),\depth(N)\}$, and
		\item[(2)] 
		$\reg(M\oplus N)=\max\{\reg(M),\reg(N)\}$.
	\end{itemize}
\end{Lemma}

\begin{Lemma} {\em (\cite[Lemmas 2.1 and 3.1]{HT})}
	\label{exact}
	Let $0\longrightarrow M\longrightarrow N\longrightarrow P\longrightarrow 0$  be an exact sequence of finitely generated graded $S$-modules. Then we have
	\begin{enumerate}
	\item 	\label{exact-2}$\depth\,(M)\geq \min\{\depth\,(N), \depth\,(P)+1\}$, the equality holds if $\depth\,(N)\\ \neq \depth\,(P)$.
	\item 	\label{exact-4} $\reg\,(M)\leq \max\{\reg\,(N), \reg\,(P)+1\}$, the equality holds if $\reg\,(N) \neq \reg\,(P)$.
\end{enumerate}
\end{Lemma}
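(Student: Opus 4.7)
The plan is to derive both (1) and (2) from the long exact sequence of local cohomology associated with $0\to M\to N\to P\to 0$. Since each $H^i_{S_+}(-)$ is a graded functor, this sequence is natural and graded:
\[
\cdots \to H^{i-1}_{S_+}(P) \to H^i_{S_+}(M) \to H^i_{S_+}(N) \to H^i_{S_+}(P) \to H^{i+1}_{S_+}(M) \to \cdots .
\]
Part (1) will follow by tracking when entire modules vanish; part (2) by tracking when specific graded components vanish.

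For (1), whenever $i < \min\{\depth(N),\depth(P)+1\}$, both $H^i_{S_+}(N)=0$ and $H^{i-1}_{S_+}(P)=0$, so $H^i_{S_+}(M)$ is sandwiched between zeros and vanishes, giving the inequality. For equality under $\depth(N)\ne\depth(P)$, I would split into two cases. If $\depth(N)<\depth(P)$, set $i=\depth(N)$; then $H^{i-1}_{S_+}(P)=H^i_{S_+}(P)=0$, so $H^i_{S_+}(M)\cong H^i_{S_+}(N)\ne 0$. If $\depth(N)>\depth(P)$, set $i=\depth(P)+1$; then $H^{i-1}_{S_+}(N)=0$, so $H^{i-1}_{S_+}(P)\ne 0$ injects into $H^i_{S_+}(M)$, forcing the latter to be nonzero. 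Either way, $\depth(M)\le\min\{\depth(N),\depth(P)+1\}$.

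For (2), the same exact sequence in each internal degree $t$ shows that if $(H^i_{S_+}(M))_t\ne 0$, then either $(H^i_{S_+}(N))_t\ne 0$ or $(H^{i-1}_{S_+}(P))_t\ne 0$, so $a_i(M)+i\le\max\{a_i(N)+i,\,a_{i-1}(P)+i\}\le \max\{\reg(N),\reg(P)+1\}$, yielding the bound on $\reg(M)$. For equality under $\reg(N)\ne\reg(P)$, I again split: if $\reg(N)>\reg(P)$, pick $(i,t)$ realizing $a_i(N)+i=\reg(N)$; since $t=\reg(N)-i>\reg(P)-i\ge a_i(P)$, we get $(H^i_{S_+}(P))_t=0$, hence $(H^i_{S_+}(M))_t$ surjects onto the nonzero $(H^i_{S_+}(N))_t$. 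If $\reg(N)<\reg(P)$, pick $(i,t)$ realizing $a_i(P)+i=\reg(P)$; degree counts show both $(H^i_{S_+}(N))_t=(H^{i+1}_{S_+}(N))_t=0$, so $(H^{i+1}_{S_+}(M))_t\cong(H^i_{S_+}(P))_t\ne 0$, producing $\reg(M)\ge\reg(P)+1$.

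The only mildly delicate point is the second equality case of (2): I must verify the simultaneous vanishing of two consecutive local cohomologies of $N$ in the same internal degree $t$. This rests on $t=\reg(P)-i>\reg(N)-(i+1)\ge a_{i+1}(N)$ and the analogous bound for $a_i(N)$, which use only the strict inequality $\reg(N)<\reg(P)$. Everything else is a straightforward diagram chase in the long exact sequence, so I do not anticipate genuine obstacles beyond careful bookkeeping of indices and degrees.
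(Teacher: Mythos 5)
Your proof is correct. The paper does not prove this lemma at all---it simply cites \cite[Lemmas 2.1 and 3.1]{HT}---and your argument via the graded long exact sequence of local cohomology, combined with the characterizations $\depth(M)=\min\{i: H_{S_+}^i(M)\ne 0\}$ and $\reg(M)=\max\{a_i(M)+i\}$ from the paper's Definition, is exactly the standard proof given in that reference; all four case analyses (including the degree bounds $t>a_i(P)$, $t>a_i(N)$, $t>a_{i+1}(N)$ needed for the two equality cases) check out.
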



\begin{Lemma}{\em (\cite[Lemma 2.2, Lemma 3.2]{HT})}
	\label{sum}
	Let $S_{1}=\KK[x_{1},\dots,x_{m}]$ and $S_{2}=\KK[x_{m+1},\dots,x_{n}]$ be two polynomial rings  over $\KK$, let $I\subset S_{1}$ and
	$J\subset S_{2}$ be two non-zero homogeneous  ideals. Let $S=S_1\otimes_\KK S_2$.  Then we have
	\begin{enumerate}
		\item \label{sum-1} $\reg\,(S/(I+J))=\reg\,(S_1/I)+\reg\,(S_2/J)$;
		\item \label{sum-2} $\depth\,(S/(I+J))=\depth\,(S_1/I)+\depth\,(S_2/J)$;
	\end{enumerate}
\end{Lemma}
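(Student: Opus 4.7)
The plan is to recognize $S/(I+J)$ as the $\KK$-algebra tensor product $(S_1/I) \otimes_\KK (S_2/J)$ and to exploit a Künneth-type decomposition of its minimal graded free resolution over $S = S_1 \otimes_\KK S_2$, from which both invariants fall out simultaneously.

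First I would fix minimal graded free resolutions $F_\bullet \to S_1/I$ over $S_1$ and $G_\bullet \to S_2/J$ over $S_2$. Since $\KK$ is a field, both $S_1$ and $S_2$ are $\KK$-flat, so the total complex $F_\bullet \otimes_\KK G_\bullet$ is an exact complex of graded $S$-modules resolving $(S_1/I) \otimes_\KK (S_2/J) \cong S/(I+J)$. Each term $F_i \otimes_\KK G_j$ is a free $S$-module because $S = S_1 \otimes_\KK S_2$. Minimality follows because every entry of a differential in $F_\bullet$ lies in $(x_1,\dots,x_m)$ and every entry of a differential in $G_\bullet$ lies in $(x_{m+1},\dots,x_n)$; hence the total differential has all entries in the graded maximal ideal $S_+$.

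From this tensor-product resolution I would read off the Betti-number identity
\[
\beta_{i,j}^{S}(S/(I+J)) \;=\; \sum_{\substack{a+c=i\\ b+d=j}} \beta_{a,b}^{S_1}(S_1/I)\,\beta_{c,d}^{S_2}(S_2/J).
\]
The projective dimensions then add: $\pd_S(S/(I+J)) = \pd_{S_1}(S_1/I) + \pd_{S_2}(S_2/J)$, so the depth formula (\ref{sum-2}) follows immediately from the Auslander--Buchsbaum equality applied in each of the three polynomial rings, using $\depth(S_1) + \depth(S_2) = m + (n-m) = \depth(S)$. For the regularity statement (\ref{sum-1}), note that $\reg$ is the maximum of $j-i = (b-a)+(d-c)$ over pairs of nonvanishing Betti numbers; since the two factor maxima are independently attainable, the overall maximum is their sum.

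The main technical obstacle is verifying that the tensor-product resolution is genuinely minimal rather than merely exact, since without minimality the Betti-number identity collapses to an inequality on Tor dimensions that would be too weak to pin down $\reg$ and $\pd$ exactly; the grading argument above dispenses with this issue. Everything else is routine bookkeeping on the Betti table, together with the standard Auslander--Buchsbaum identity.
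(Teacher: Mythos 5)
Your argument is correct: the tensor product $F_\bullet\otimes_\KK G_\bullet$ of the two minimal graded free resolutions is indeed a resolution of $S/(I+J)\cong (S_1/I)\otimes_\KK(S_2/J)$ by Künneth over the field $\KK$, your grading observation does establish minimality, and the resulting Betti-number convolution yields both the additivity of projective dimension (hence of depth, via Auslander--Buchsbaum) and of regularity, since the extremal positions in the two Betti tables combine to a nonzero Betti number of the product. Note that the paper itself offers no proof of this lemma --- it is quoted from Hoa and Tam \cite{HT} --- and your tensor-of-resolutions argument is essentially the standard proof of that cited result, so there is nothing in the paper to diverge from.
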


For a subset $A\subset V(G)$, let $(A)=(v\mid v\in A)$ be an ideal of $S=\KK[V]$ generated by the element in $A$. The following lemma is very important for the whole paper.
\begin{Lemma}
	\label{decomposition}{\em (\cite[Lemma 1.5]{ZCYY})}
	Let $G=(V,E)$ be  a connected simple graph. Let $J=(N_G(v))+I_{G \backslash N_G[v]}$ and $K=(v)+I_{G \backslash v}$, where $v\in V$. Then
	\begin{enumerate}
		\item  \label{decomposition-1} $J+K=(N_G[v])+I_{G \backslash N_G[v]}$;
		\item \label{decomposition-2} $I_G=J\cap K$;
		\item \label{decomposition-3} $\depth(S/J)=\depth(S/(J+K))+1$;
       \item \label{decomposition-4} $\reg(S/J)=\reg(S/(J+K))$.
\end{enumerate}
\end{Lemma}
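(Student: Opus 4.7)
\smallskip

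The plan is to verify each clause in turn, treating (1) and (2) as direct combinatorial identities of monomial ideals and then deducing (3) and (4) from Lemma~\ref{sum} by observing that $S/J$ is obtained from $S/(J+K)$ by adjoining the single polynomial variable $v$.

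For (1), I would just compute: $J+K = (v) + (N_G(v)) + I_{G\setminus v} + I_{G\setminus N_G[v]} = (N_G[v]) + I_{G\setminus v} + I_{G\setminus N_G[v]}$. Every generator of $I_{G\setminus v}$ is an edge $\{u,w\}$ with $u,w\neq v$; if either endpoint lies in $N_G(v)$ the monomial $uw$ is already in $(N_G[v])$, and otherwise $\{u,w\}$ is an edge of $G\setminus N_G[v]$. Hence $I_{G\setminus v}\subseteq (N_G[v])+I_{G\setminus N_G[v]}$, and (1) follows. For (2), since $J$ and $K$ are both monomial ideals, $J\cap K$ is the monomial ideal generated by lcms of generators, so it suffices to check that a monomial $m\in J\cap K$ lies in $I_G$. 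A short case analysis on which generator forces $m\in J$ (a vertex in $N_G(v)$, or an edge in $G\setminus N_G[v]$) and which forces $m\in K$ (the variable $v$, or an edge in $G\setminus v$) shows in each of the four cases that $m$ is divisible either by an edge of $G\setminus v$ or by an edge $\{v,u\}$ with $u\in N_G(v)$, and hence lies in $I_G$. The reverse containment $I_G\subseteq J\cap K$ is checked edge by edge in the same way.

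For (3) and (4), the decisive observation is that $v$ does not appear in any generator of $J$: the generators of $(N_G(v))$ involve only vertices of $N_G(v)$, and the generators of $I_{G\setminus N_G[v]}$ involve only vertices of $V\setminus N_G[v]$. Splitting the variable set as $V = V_1 \sqcup V_2$ with $V_1 = N_G(v)$ and $V_2 = V\setminus N_G(v)$, and writing $S_1 = \KK[V_1]$, $S_2 = \KK[V_2]$, I would apply Lemma~\ref{sum} to $J = (N_G(v)) + I_{G\setminus N_G[v]}$ (the first summand sitting in $S_1$, the second in $S_2$) to get
\[
\depth(S/J) = \depth\bigl(S_1/(N_G(v))\bigr) + \depth\bigl(S_2/I_{G\setminus N_G[v]}\bigr) = \depth\bigl(S_2/I_{G\setminus N_G[v]}\bigr),
\]
and similarly $\reg(S/J) = \reg\bigl(S_2/I_{G\setminus N_G[v]}\bigr)$. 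Since $v\in V_2$ but $v$ does not appear in $I_{G\setminus N_G[v]}$, we have $S_2/I_{G\setminus N_G[v]} \cong \bigl(\KK[V\setminus N_G[v]]/I_{G\setminus N_G[v]}\bigr)[v]$, so its depth exceeds $\depth\bigl(\KK[V\setminus N_G[v]]/I_{G\setminus N_G[v]}\bigr)$ by one while its regularity is unchanged. Applying Lemma~\ref{sum} again to $J+K = (N_G[v]) + I_{G\setminus N_G[v]}$, now with the splitting $V = N_G[v] \sqcup (V\setminus N_G[v])$, yields $\depth(S/(J+K)) = \depth\bigl(\KK[V\setminus N_G[v]]/I_{G\setminus N_G[v]}\bigr)$ and $\reg(S/(J+K)) = \reg\bigl(\KK[V\setminus N_G[v]]/I_{G\setminus N_G[v]}\bigr)$. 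Comparing the two computations gives (3) and (4).

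There is no real obstacle; the only pitfall is bookkeeping, namely keeping track of the fact that $v\in V_2$ in the first application of Lemma~\ref{sum} (which is why the extra $+1$ in depth appears) but $v\in V_1$ in the second. Once the variable partition is set up correctly, Lemma~\ref{sum} does all the work.
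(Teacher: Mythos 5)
Your proposal is correct and follows essentially the same route as the paper: parts (1) and (2) are the same direct monomial computations (the paper checks $I_{G\setminus v}\subseteq (N_G[v])+I_{G\setminus N_G[v]}$ and verifies $J\cap K\subseteq I_G$ termwise, just as you do via lcms of generators), and parts (3) and (4) rest on the same observation that $J+K=J+(v)$ with $v$ absent from the generators of $J$, so $v$ is a free variable on $S/J$ -- the paper states this tersely as ``(3) and (4) follow from (1)'', while you package it through two applications of Lemma~\ref{sum}, which is only a cosmetic difference (with the harmless degenerate case $I_{G\setminus N_G[v]}=0$ handled directly).
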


A graph $G$ is called \emph{bipartite} if there  exists  a  \emph{bipartition} $V(G)=V_1\sqcup V_2$ with $V_1\cap V_2=\emptyset$ such that each edge of $G$ is of
the form $\{i,j\}$ with $i\in V_1$ and $j\in V_2$.
For  a positive integer $n$, let  $[n]=\{1,2,\ldots, n\}$ by convention. In \cite{HH2},
Herzog and Hibi classified all C-M bipartite graphs. we state their result.
\begin{Theorem}
  {\em (\cite[Theorem 3.4]{HH2})}
    \label{cm bipart}
Let $G=(V(G),E(G))$ be a  bipartite graph with bipartition $V(G)=\{x_1,x_2,\dots,x_n\} \sqcup \{y_1,y_2,\dots,y_m\}$. Then $G$ is C-M if and only if $n=m$, and there exists a labeling such that
\begin{enumerate}
 \item   \label{cm bipart-1} $\{x_i,y_i\} \in E(G)$ for $i\in [n]$,
 \item   \label{cm bipart-2} if $\{x_i,y_j\} \in E(G)$, then $i \le j$, and
\item  \label{cm bipart-3} if $\{x_i,y_j\}\in E(G)$ and $\{x_j,y_k\}\in E(G)$ with $i<j<k$, then $\{x_i,y_k\} \in E(G)$.
    \end{enumerate}
\end{Theorem}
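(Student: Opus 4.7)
The plan is to establish a correspondence between Cohen--Macaulay bipartite graphs and finite posets, following the poset-theoretic approach. Given a bipartite $G$ with a labeling satisfying (1)--(3), define a relation on $[n]$ by $i\preceq j$ iff $\{x_i,y_j\}\in E(G)$. Condition (1) gives reflexivity, condition (2) combined with the natural order on $[n]$ gives antisymmetry, and condition (3) gives transitivity, so $([n],\preceq)$ is a finite poset $P$ that completely encodes $G$. The two directions of the theorem then become: (a) every $G$ encoded by a poset is C-M, and (b) every C-M bipartite $G$ arises from a poset.

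For the ``if'' direction, I would prove that the independence complex $\Delta(G)$ is pure shellable, hence Cohen--Macaulay. The maximal independent sets of $G$ are in bijection with pairs $(J,F)$ where $J$ is an order ideal of $P$ and $F=[n]\setminus J$ is the complementary filter, via the facet $\{x_i : i\in J\}\cup\{y_j : j\in F\}$. In particular every facet has cardinality $n$ (so $\Delta(G)$ is pure of dimension $n-1$), and a linear extension of $P$ induces a natural lex-type order on these pairs which one verifies is a shelling. Cohen--Macaulayness of $S/I_G$ then follows from the standard fact that pure shellable complexes are C-M.

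For the ``only if'' direction, assume $S/I_G$ is C-M, hence unmixed. Since $G$ is bipartite without isolated vertices, unmixedness forces the existence of a perfect matching: K\"onig's theorem plus the observation that both sides of the bipartition are vertex covers pins down $n=m$ and a matching $\{x_i,y_i\}$, after relabeling, giving (1). To derive (2), orient each non-matching edge $\{x_i,y_j\}$ as $i\to j$; if no labeling can satisfy (2), this digraph has a directed cycle, and a minimal-length one (of length $2$) exhibits an induced $K_{2,2}$ on $\{x_i,x_j,y_i,y_j\}$. Using the decomposition $I_{K_{2,2}}=(x_i,x_j)\cap(y_i,y_j)$ together with the depth lemma applied to the resulting short exact sequence, one obtains $\depth(S/I_{K_{2,2}})=1<2=\dim$, so $K_{2,2}$ is not C-M; Reisner's criterion (applied to the link of the appropriate face of $\Delta(G)$) then contradicts the C-M hypothesis on $G$. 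Condition (3) is handled by a parallel argument: a failure, after fixing the labeling supplied by (2), yields an induced obstruction such as an induced $6$-cycle or a closely related configuration that one checks is non-C-M in the same manner.

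The main obstacle is the ``only if'' direction, and specifically the step of transferring non-Cohen--Macaulayness of a small forbidden configuration back to the original $G$. One must select the faces whose links are to be examined so that the forbidden subgraphs ($K_{2,2}$ for (2), and a $6$-cycle-type configuration for (3)) arise cleanly as links in $\Delta(G)$; this requires a minimal-counterexample choice (e.g.\ ensuring the violating directed cycle really has length $2$) and a careful bookkeeping of which vertices have been localized away. Organizing this chain of reductions, and verifying by explicit depth computation that each candidate obstruction truly fails to be C-M, is the technical heart of the proof.
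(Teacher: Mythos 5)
The paper itself offers no proof of this statement: it is quoted verbatim from Herzog--Hibi \cite{HH2}, whose argument runs through the Eagon--Reiner criterion (Cohen--Macaulayness of $S/I_G$ versus linear resolution/linear quotients of the vertex cover ideal $I_G^{\vee}$) and Birkhoff's correspondence with distributive lattices. So your poset-plus-shellability plan is in any case a different route. Its ``if'' half is viable in outline: with $i\preceq j$ defined by $\{x_i,y_j\}\in E(G)$, the facets of the independence complex are the sets $\{x_i: i\in F\}\cup\{y_j: j\notin F\}$ with $F$ a \emph{filter} of the poset (your statement swaps the roles of ideal and filter --- a harmless convention slip), purity gives unmixedness, and shellability (or, more simply, vertex decomposability by induction, deleting a maximal element's matched pair) does hold; but the shelling verification for your lex order is precisely the content of that half, and you only assert it.

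The genuine gaps are in the ``only if'' half. First, your reduction of ``no labeling satisfies (2)'' to a directed $2$-cycle is unjustified as written: a minimal directed cycle need not have length $2$ unless you already have the closure property ($\{x_i,y_j\},\{x_j,y_k\}\in E \Rightarrow \{x_i,y_k\}\in E$), and in your outline that property --- condition (3) --- is only derived \emph{afterwards}. The correct logical order is to obtain (1) and the closure condition from unmixedness alone (the Ravindra/Villarreal characterization of unmixed bipartite graphs: a violation of (3) produces minimal vertex covers of different sizes, so no link computation is needed there, and the violating configuration need not induce a $6$-cycle at all, since extra chords among $\{x_i,y_i,x_j,y_j,x_k,y_k\}$ are possible); only then is the full Cohen--Macaulay hypothesis used, exactly to exclude $2$-cycles, after which antisymmetry and transitivity make $\preceq$ a partial order and any linear extension yields (2). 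Second, even for the $2$-cycle step the transfer you defer is the crux: an induced non-Cohen--Macaulay subgraph does not by itself contradict Cohen--Macaulayness, so you must actually construct an independent set $A$, disjoint from and non-adjacent to $\{x_i,x_j,y_i,y_j\}$, with $G\setminus N_G[A]$ reducing to the $K_{2,2}$ on those four vertices (or to some configuration whose failure of Cohen--Macaulayness you verify), and this construction uses the unmixed structure in an essential way. As it stands the proposal is a plausible alternative strategy, but these two points are real gaps rather than bookkeeping.
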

 By Theorem \ref{cm bipart},  the vertices $y_1$ and $x_n$ must must be of degree one, and their neighborhood points are $x_1$ and $y_n$,  respectively.
  Let $N_G(y_n)=\{x_{i_1},\ldots, x_{i_s},x_n\}$ for some $x_{i_j}\in \{x_1,\ldots,x_n\}$. Francisco et al. in \cite{FHT} showed:

\begin{Lemma} {\em (\cite[Lemma 3.4]{FHT})}
 \label{cm subgraph}Let $G$ be  a C-M bipartite graph  with bipartition $V(G)=\{x_1,\dots,x_n\} \sqcup \{y_1,\dots,y_n\}$. Then
\begin{enumerate}
 \item   \label{cm subgraph-1} $G\backslash\{x_n,y_n\}$ is a C-M bipartite graph.
 \item   \label{cm subgraph-2} $G\backslash\{x_{i_1},y_{i_1},\ldots,x_{i_s},y_{i_s},x_n,y_n\}$ is a C-M bipartite graph.
    \end{enumerate}
 \end{Lemma}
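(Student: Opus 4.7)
The plan is to verify the three defining properties of Theorem \ref{cm bipart} for each of the induced subgraphs in parts (1) and (2), using the natural labeling inherited from $G$ (with a minor reindexing needed for (2)). Because Theorem \ref{cm bipart} is already an ``if and only if'' characterization of C--M bipartite graphs, it suffices to work combinatorially; no commutative-algebra input about $S/I_G$ is needed.

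For part (1), let $G' = G\backslash\{x_n,y_n\}$. This graph has bipartition $\{x_1,\dots,x_{n-1}\}\sqcup\{y_1,\dots,y_{n-1}\}$, which has equal parts. The perfect matching edges $\{x_i,y_i\}$ for $i\in[n-1]$ persist, giving condition \eqref{cm bipart-1}. The monotonicity condition \eqref{cm bipart-2}, $i\le j$ whenever $\{x_i,y_j\}\in E(G')$, is directly inherited. For the transitivity condition \eqref{cm bipart-3}, given edges $\{x_i,y_j\},\{x_j,y_k\}\in E(G')$ with $i<j<k\le n-1$, condition \eqref{cm bipart-3} applied to $G$ yields $\{x_i,y_k\}\in E(G)$, and since $i,k\le n-1$ this edge persists in $G'$.

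For part (2), set $I=\{i_1<\cdots<i_s\}\cup\{n\}$ and $J=[n]\setminus I$, and enumerate $J=\{j_1<\cdots<j_{n-s-1}\}$. Relabel $x_{j_t}\mapsto x_t'$ and $y_{j_t}\mapsto y_t'$; then $G''$ is bipartite with parts of equal size $n-s-1$. Condition \eqref{cm bipart-1} holds because each pair $\{x_{j_t},y_{j_t}\}$ is an edge of $G$ and neither endpoint is deleted; condition \eqref{cm bipart-2} is immediate since the relabeling preserves the order of indices. The only step needing care is condition \eqref{cm bipart-3}: suppose $\{x_t',y_u'\},\{x_u',y_v'\}\in E(G'')$ with $t<u<v$, i.e.\ $\{x_{j_t},y_{j_u}\},\{x_{j_u},y_{j_v}\}\in E(G)$ with $j_t<j_u<j_v$. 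Applying condition \eqref{cm bipart-3} for $G$ produces $\{x_{j_t},y_{j_v}\}\in E(G)$, and since $j_t,j_v\in J$ neither endpoint is deleted, so $\{x_t',y_v'\}\in E(G'')$.

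The only real obstacle is bookkeeping: one must check that the specific deletion set in (2) is closed enough under the defining relations that no ``forbidden'' edge of the form $\{x_{j_t},y_{j_v}\}$ can have a deleted endpoint. This is automatic here because we remove full matched pairs $\{x_i,y_i\}$ for $i\in I$, so $j_t,j_v\in J$ as soon as the relabeled endpoints are present. Everything else reduces to transporting the hypotheses of Theorem \ref{cm bipart} through the inclusion $E(G'')\subset E(G)$.
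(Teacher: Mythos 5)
Your argument is correct. There is nothing in the paper to compare it with: the lemma is quoted verbatim from \cite[Lemma 3.4]{FHT} and no proof is given here, and your verification of the three conditions of Theorem \ref{cm bipart} for the induced subgraphs (with the order-preserving relabeling in part (2)) is a complete and valid argument, of the same combinatorial kind as in that reference. In fact your proof shows the slightly stronger fact that deleting \emph{any} collection of matched pairs $\{x_i,y_i\}$, $i\in I$, from a graph with a Herzog--Hibi labeling again yields such a graph, parts (1) and (2) being the special cases $I=\{n\}$ and $I=\{i_1,\dots,i_s,n\}$; the only implicit assumption you should flag is that the given labeling of $G$ is one satisfying Theorem \ref{cm bipart}, which is how the labeling is fixed in the paragraph preceding the lemma.
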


For a  proper ideal $I\subset  S$, its \emph{arithmetic rank},  denoted by $\ara(I)$, is  the minimum number of elements of
$S$ that generate an ideal whose radical is $I$.  An ideal   is said to be a \emph{set-theoretic complete intersection} if its arithmetic rank  is equal to its height.
In general, if $I$ is a  square-free   monomial ideal, we have the well-known inequalities
\[
\height(I)\le \pd(S/I)\le \ara(I)
\]
where  $\height(I)$ is the height of $I$  and $\pd(S/I)$ is the projective dimension of the quotient ring $S/I$.

\begin{Lemma} 
 \label{depth-reg}Let  $G=(V(G),E(G))$ be  a C-M bipartite graph without isolated vertices. Then
\begin{enumerate}
 \item   \label{depth-reg-1} $\depth(S/I_{G})=\frac{|V(G)|}{2}$;
 \item   \label{depth-reg-2}  $\reg(S/I_{G})=\vartheta(G)$, where $\vartheta(G)$ is the induced matching number of $G$.
    \end{enumerate}
 \end{Lemma}
\begin{proof} (1) Since $G$ is  a C-M bipartite graph,    $I_G$ is  unmixed and a set-theoretic complete intersection by \cite[Corollary 3.5]{MA}. This forces $\pd(S/I_G)=\height(I_G)=\frac{|V(G)|}{2}$.
 It follows  from the graded Auslander–Buchsbaum formula that $\depth(S/I_{G})=|V(G)|-\pd(S/I_G)=\frac{|V(G)|}{2}$.

(2)  is a direct consequence  of \cite[Corollary 3.4]{T}.
\end{proof}

\begin{Lemma}{\em (\cite[Theorem 3.3, Corollary 3.3]{Zhu1})}
	\label{path}
 Let $n\ge 2$ be an integer and $P_n$  be  a path  with $n$ vertices, then
 \[
 \depth(S/I_G)= \lceil\frac{n}{3}\rceil  \text{\ and\ }  \reg(S/I_G)=\lfloor\frac{n+1}{3}\rfloor.
 \]
\end{Lemma}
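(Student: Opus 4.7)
The plan is to induct on $n$, treating the depth and regularity claims in parallel through the decomposition machinery of Lemma \ref{decomposition}. For $n \in \{2,3,4\}$ one checks the assertions by hand: $I_{P_2}$ is a principal monomial, and $I_{P_3}$, $I_{P_4}$ admit quick direct splittings that yield the required values.

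For $n \geq 5$, I would apply Lemma \ref{decomposition} at the vertex $v = x_2$ of $P_n$. Setting $a_k := \lceil k/3 \rceil$ and $r_k := \lfloor (k+1)/3 \rfloor$, one finds $J = (x_1,x_3) + I_{P_{n-3}}$, with $P_{n-3}$ sitting on $\{x_4,\ldots,x_n\}$, and $K = (x_2) + I_{P_{n-2}}$, with $P_{n-2}$ sitting on $\{x_3,\ldots,x_n\}$ and $x_1$ appearing as an isolated vertex of $P_n \setminus x_2$. Combining the induction hypothesis with Lemma \ref{sum}, and tracking the free variables $x_2$ (in $S/J$) and $x_1$ (in $S/K$), one reads off
\[
\depth(S/J) = 1 + a_{n-3}, \quad \depth(S/K) = 1 + a_{n-2}, \quad \depth(S/(J+K)) = a_{n-3},
\]
\[
\reg(S/J) = r_{n-3}, \quad \reg(S/K) = r_{n-2}, \quad \reg(S/(J+K)) = r_{n-3}.
\]

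Next, I would plug these into the short exact sequence $0 \to S/I_{P_n} \to S/J \oplus S/K \to S/(J+K) \to 0$ coming from Lemma \ref{decomposition}(\ref{decomposition-2}). Lemma \ref{exact}(\ref{exact-2}) gives
\[
\depth(S/I_{P_n}) \geq \min\{1+a_{n-3},\; 1+a_{n-2},\; a_{n-3}+1\} = 1 + a_{n-3} = \lceil n/3 \rceil,
\]
and since $\depth(S/J \oplus S/K) = 1 + a_{n-3}$ differs from $\depth(S/(J+K)) = a_{n-3}$, the equality clause of Lemma \ref{exact}(\ref{exact-2}) forces equality, closing the depth induction. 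For the regularity, Lemma \ref{exact}(\ref{exact-4}) yields $\reg(S/I_{P_n}) \leq \max\{r_{n-3},\; r_{n-2},\; r_{n-3}+1\}$, and a routine case analysis on $n \bmod 3$ shows this maximum equals $r_n = \lfloor (n+1)/3 \rfloor$. The reverse bound $\reg(S/I_{P_n}) \geq \vartheta(P_n) = \lfloor (n+1)/3 \rfloor$ recalled in the introduction completes the induction.

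The main obstacle is careful bookkeeping: keeping track of which variables are free, which are killed, and which are bound inside the smaller edge ideals when computing the invariants of $S/J$, $S/K$, and $S/(J+K)$. A secondary subtlety is that the equality clause of Lemma \ref{exact}(\ref{exact-4}) does not fire uniformly across all residues of $n$ modulo $3$, which is why I extract the exact regularity using the induced matching lower bound rather than attempting to squeeze equality out of the exact sequence alone.
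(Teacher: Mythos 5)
Your argument is correct, but note that the paper does not prove this lemma at all: it is quoted verbatim from \cite{Zhu1} (Theorem 3.3 and Corollary 3.3 there), where it arises as a special case of more general computations for path ideals of line graphs. So your proposal is necessarily a different route --- a self-contained induction using only the toolkit of this paper: the splitting $I_{P_n}=J\cap K$ of Lemma \ref{decomposition} at the vertex $x_2$, the Hoa--Tam lemmas, and Katzman's bound $\reg(S/I_G)\ge\vartheta(G)$. The bookkeeping checks out: $J=(x_1,x_3)+I_{P_{n-3}}$, $K=(x_2)+I_{P_{n-2}}$, $J+K=(x_1,x_2,x_3)+I_{P_{n-3}}$, the depth values you list are right, and since $\depth(S/J\oplus S/K)=1+a_{n-3}\neq a_{n-3}=\depth(S/(J+K))$ the equality clause of Lemma \ref{exact}(\ref{exact-2}) does fire, giving $\depth(S/I_{P_n})=1+a_{n-3}=\lceil n/3\rceil$. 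For regularity the key arithmetic hinge is $r_{n-3}+1=\lfloor(n-2)/3\rfloor+1=\lfloor(n+1)/3\rfloor=r_n\ge r_{n-2}$, so the upper bound from Lemma \ref{exact}(\ref{exact-4}) is exactly $r_n$; and your decision to finish with the lower bound $\vartheta(P_n)=\lfloor(n+1)/3\rfloor$ (easily verified by taking every third edge) is the right move, since when $r_{n-2}=r_{n-3}$ the equality clause of Lemma \ref{exact}(\ref{exact-4}) is unavailable. Two cosmetic caveats: Lemma \ref{sum} is stated for two nonzero ideals, so where a factor is just a free variable ($x_2$ in $S/J$, $x_1$ in $S/K$) you are really using the standard fact that adjoining a free variable adds one to depth and leaves regularity unchanged --- the paper itself uses this implicitly, so it is not a gap; and one could shortcut the regularity half entirely by observing that $P_n$ is chordal, so $\reg(S/I_{P_n})=\vartheta(P_n)$ by the results recalled in the introduction. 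Your approach trades the generality of \cite{Zhu1} for a short argument that stays entirely inside the machinery this paper already sets up.
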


\medskip
\section{Study of   bipartite graphs}
\label{sec:study}
In this section,  we will study the depth and regularity  of a  bipartite graph obtained from  a Cohen-Macaulay bipartite graph by deleting its one leaf.
We give some exact formulas for the depth and regularity  of the  edge ideal of such a graph.

\begin{Lemma}
\label{subgraph-depth}
Let  $G$ be a  C-M bipartite graph with  a leaf $u$. Let $N_G(u)=\{v\}$  with $\deg_G(v)\ge 2$.  Then
 \[
\depth(S_{G\backslash u}/I_{G\backslash u})=\depth(S_G/I_G)-1.
\]
\end{Lemma}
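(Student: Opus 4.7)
The plan is to apply Lemma \ref{decomposition} to $G$ at the leaf $u$, then squeeze $\depth(S_{G\backslash u}/I_{G\backslash u})$ between a lower bound coming from a short exact sequence and the upper bound of Morey-Villarreal cited in the introduction. Write $2n := |V(G)|$, so that $\depth(S_G/I_G) = n$ by Lemma \ref{depth-reg}(1), and the goal is to prove $\depth(S_{G\backslash u}/I_{G\backslash u}) = n - 1$. We may assume $G$ is connected: otherwise $u$ lies in a single C-M bipartite component, the depths of the remaining components are untouched when we pass from $G$ to $G\backslash u$, and the statement reduces to the connected case.

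Setting $J = (v) + I_{G\backslash\{u,v\}}$ and $K = (u) + I_{G\backslash u}$ as in Lemma \ref{decomposition} applied at the vertex $u$, we get $J + K = (u,v) + I_{G\backslash\{u,v\}}$, together with the identifications $S/K \cong S_{G\backslash u}/I_{G\backslash u}$ and $S/(J+K) \cong S_{G\backslash\{u,v\}}/I_{G\backslash\{u,v\}}$. The next step is to verify that $G\backslash\{u,v\}$ is a C-M bipartite graph on $2n-2$ vertices with no isolated vertices. For the absence of isolated vertices: if some $w \in V(G)\setminus\{u,v\}$ were isolated in $G\backslash\{u,v\}$, then $w$ would be a leaf in $G$ with $N_G(w) = \{v\}$, but condition (1) of Theorem \ref{cm bipart} forces any two leaves attached to a common $v = y_k$ to coincide, contradicting $w \neq u$. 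For C-M-ness, after relabeling so that $\{u,v\} = \{x_n, y_n\}$ in a Herzog-Hibi labeling of $G$, Lemma \ref{cm subgraph}(1) applies directly (equivalently, use the intrinsic fact that $\Delta(G\backslash\{u,v\}) = \link_{\Delta(G)}(u)$ and that links of faces in a C-M simplicial complex are themselves C-M). Thus $\depth(S/(J+K)) = n-1$ by Lemma \ref{depth-reg}(1), and Lemma \ref{decomposition}(3) gives $\depth(S/J) = n$.

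Finally, I would feed the short exact sequence
\[
0 \longrightarrow S/I_G \longrightarrow S/J \oplus S/K \longrightarrow S/(J+K) \longrightarrow 0
\]
into Lemma \ref{exact}(1), using $\depth(S/I_G) = n$, $\depth(S/J \oplus S/K) = \min\{n,\depth(S/K)\}$, and $\depth(S/(J+K))+1 = n$. A short case analysis on the equality clause in Lemma \ref{exact}(1) produces the dichotomy $\depth(S/K) \geq n$ or $\depth(S/K) = n-1$. The hypothesis $\deg_G(v) \geq 2$ now ensures that $v$ is not isolated in $G\backslash u$, so $G\backslash u$ is a connected bipartite graph on $2n-1$ vertices; the Morey-Villarreal inequality recalled in the introduction then forces $\depth(S/K) \leq \lfloor(2n-1)/2\rfloor = n-1$, ruling out the first alternative. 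Combining the two gives $\depth(S_{G\backslash u}/I_{G\backslash u}) = n-1 = \depth(S_G/I_G)-1$.

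The one subtle step I expect will be the claim that $G\backslash\{u,v\}$ is C-M bipartite: Lemma \ref{cm subgraph}(1) is stated for the specific pair $\{x_n, y_n\}$ of a Herzog-Hibi labeling, so one must either check that any leaf of $G$ can be placed at the top of \emph{some} Herzog-Hibi labeling, or instead invoke the link-of-a-C-M-complex argument sketched above. Both routes are short but require a small appeal to structure theory beyond what is literally stated in Section \ref{sec:prelim}.
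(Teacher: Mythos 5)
Your proposal is correct, but it follows a genuinely different route from the paper. The paper proves the lemma by induction on $n$: it decomposes $I_{G\backslash u}$ at an auxiliary vertex $x_{i_1}$ adjacent to $v$, runs a case analysis on $|N_H(v)|$ and on the shape of $H\backslash N_H[x_{i_1}]$ (invoking Lemma \ref{cm subgraph}(\ref{cm subgraph-2}) and the inductive hypothesis to evaluate $\depth(S_H/J)$ and $\depth(S_H/K)$), and finishes with the exact sequence (\ref{eqn:SES-1}). You instead decompose $G$ itself at the leaf $u$, so that three of the four depths in the Mayer--Vietoris sequence are known at once: $\depth(S_G/I_G)=n$ and $\depth(S_G/(J+K))=n-1$ by Lemma \ref{depth-reg}(\ref{depth-reg-1}), once one knows that $G\backslash N_G[u]=G\backslash\{u,v\}$ is a C-M bipartite graph without isolated vertices, and $\depth(S_G/J)=n$ by Lemma \ref{decomposition}(\ref{decomposition-3}); the equality clause of Lemma \ref{exact}(\ref{exact-2}) then leaves only the alternatives $\depth(S_G/K)\ge n$ or $\depth(S_G/K)=n-1$, and the bound $\lfloor (2n-1)/2\rfloor=n-1$ from \cite{MV} for the connected bipartite graph $G\backslash u$ eliminates the first. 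This buys you a proof with no induction and no case analysis, at the price of the two inputs you flag: the C-M-ness of $G\backslash\{u,v\}$ for an arbitrary leaf $u$, and the Morey--Villarreal bound. Both are sound: for the former, either of your fixes works --- a leaf $x_k$ is a maximal element of the partial order $i\preceq j\iff\{x_i,y_j\}\in E(G)$ encoded by Theorem \ref{cm bipart}, so a suitable relabeling along a linear extension puts it at $x_n$ and Lemma \ref{cm subgraph}(\ref{cm subgraph-1}) applies, or one observes that $\Ind(G\backslash N_G[u])$ is the link of $u$ in the C-M complex $\Ind(G)$ --- and this is no greater a liberty than the paper itself takes when it applies Lemma \ref{cm subgraph} to leaves other than $x_n$. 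For the latter, the introduction records the bound with the floor, and in your situation it also follows directly from $\depth(S/I)\le\dim(S/P)$ for associated primes $P$, since both sides of the bipartition of $G\backslash u$ are minimal vertex covers, of sizes $n-1$ and $n$. Your preliminary reductions (to connected $G$ via Lemma \ref{sum}(\ref{sum-2}), and the check that neither $G\backslash\{u,v\}$ nor $G\backslash u$ acquires isolated vertices, using the perfect matching of Theorem \ref{cm bipart}(\ref{cm bipart-1}) and $\deg_G(v)\ge 2$) are also correct.
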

\begin{proof}Let  $V(G)=\{x_1,x_2,\dots,x_n\} \sqcup \{y_1,y_2,\dots,y_n\}$ and $u=x_i$ for some $i\in [n]$, by symmetry. Then $v=y_i$ with $\deg_G(y_i)\ge 2$. Thus $n\ge 2$.
We prove the claimed formula  by  induction on $n$.
If $n=2$, then $G$ and $G\backslash u$ are paths with $4$ and $3$ vertices respectively. This case is covered by	Lemma \ref{path}.

In the following, we assume that $n\ge 3$. Let $H=G\backslash u$ and  $N_H(v)=\{x_{i_1},\ldots, x_{i_s}\}$ with  $1\leq i_1< i_2 < \dots < i_s<i$, then  $\deg_H(y_{i_1})=1$ by Theorem  \ref{cm bipart}(\ref{cm bipart-2}).
Let $N_H(x_{i_1})=\{y_{j_1},y_{j_2},\dots,y_{j_t}\}$ with $j_1=i_1<j_2 < \dots < j_t \leq n $. Set $J=(N_H(x_{i_1}))+I_{H \backslash N_H[x_{i_1}]}$,  $K=(x_{i_1})+I_{H \backslash x_{i_1}}$.
 We  distinguish between the following two cases:

(1)   If $|N_H(v)|=1$, then $s=1$ and   $H \backslash x_{i_1}$   is the disjoint union of $G\backslash  \{u,v,x_{i_1},y_{i_1}\}$ and the isolated set $\{y_{i_1},v\}$.
Thus, by Lemma \ref{depth-reg}(\ref{depth-reg-1}), we have
\[
\depth(S_H/K)=2+(n-2)=n.
\]
Meanwhile,   $H\backslash N_H[x_{i_1}]$  has one of the following forms. Figure $1$ will be helpful in understanding the arguments.
\begin{enumerate}
 \item[(a)]  $H\backslash N_H[x_{i_1}]=G\backslash  \{u,v,x_{i_1},y_{i_1}\}$;
  \item[(b)] $H\backslash N_H[x_{i_1}]$ is the disjoint union of $G\backslash  \{x_{j_1},y_{j_1}, \ldots,x_{j_t},y_{j_t}\}$ and the isolated set  $\{x_{j_2},\dots,x_{j_t}\}\backslash\{u\}$.
  \end{enumerate}
  By Lemma  \ref{cm subgraph}(\ref{cm subgraph-2}), we get that both
 $G\backslash  \{u,v,x_{i_1},y_{i_1}\}$ and  $G\backslash  \{x_{j_1},y_{j_1}, \ldots,x_{j_t},y_{j_t}\}$ are  C-M bipartite graphs. We consider the following two subcases:

 (i) If $H\backslash N_H[x_{i_1}]$  is of form (a), then we get by Lemma \ref{depth-reg}(\ref{depth-reg-1}) that
 \[
 \depth(S_H/J)=1+\depth(S_{H\backslash N_H[x_{i_1}]}/I_{H\backslash N_H[x_{i_1}]})=1+(n-2)=n-1.
 \]
  (ii) If $H\backslash N_H[x_{i_1}]$  is of form (b), then we also have
  \begin{align*}
	\depth(S_H/J)=1+\depth(S_{H\backslash N_H[x_{i_1}]}/I_{H\backslash N_H[x_{i_1}]})
=1+(t-2)+(n-t)=n-1.
\end{align*}

(2) If $|N_H(v)|\ge 2$, then $H \backslash x_{i_1}$ is the disjoint union of $G\backslash  \{x_{i_1},y_{i_1},u\}$ and an isolated vertex $y_{i_1}$.
Note that  $G\backslash  \{x_{i_1},y_{i_1},u\}$ can be  viewed  as $G_1\backslash u$, where $G_1=G\backslash  \{x_{i_1},y_{i_1}\}$. In this case, let $H'=G_1 \backslash u$, then $|N_{H'}(v)|=|N_H(v)|-1$. Thus, by induction and
 Lemma \ref{depth-reg}(\ref{depth-reg-1}), we have
\begin{align*}
 \depth(S_H/K)&=1+\depth(S_{G\backslash \{x_{i_1},y_{i_1},u\}}/I_{G\backslash \{x_{i_1},y_{i_1},u\}})\\
 &=1+\depth(S_{G_2}/I_{G_2})-1
 =\depth(S_{G_2}/I_{G_2})=n-1.
\end{align*}
At the same time, $H\backslash N_H[x_{i_1}]$  has one of the following forms.  Figure $2$ will be helpful in understanding the arguments.
\begin{enumerate}
 \item[(c)] $H\backslash N_H[x_{i_1}]$ is the disjoint union of $G\backslash  \{x_{j_1}, \ldots,x_{j_t},y_{j_1},\ldots,y_{j_t}\}$  and the isolated set  $\{x_{j_2}, \dots,x_{j_t}\}\backslash\{u\}$;
     \item[(d)]  $H\backslash N_H[x_{i_1}]=G\backslash  \{u,v,x_{i_1},y_{i_1}\}$.
  \end{enumerate}
As shown in $(a)$ and $(b)$ of  case (1)  above, we can get
\[
 \depth(S_H/J)=1+\depth(S_{H\backslash N_H[x_{i_1}]}/I_{H\backslash N_H[x_{i_1}]})=1+(n-2)=n-1.
 \]

 Furthermore,  using  Lemmas \ref{lem:direct_sum}(1),  \ref{decomposition}(\ref{decomposition-2}), \ref{exact}(\ref{exact-2}) and the following exact sequence
\begin{equation}
	0\longrightarrow \frac{S_H}{J \cap K}\longrightarrow \frac{S_H}{J} \oplus \frac{S_H}{K}\longrightarrow \frac{S_H}{J+K} \longrightarrow 0,
 \label{eqn:SES-1}
\end{equation}	
we get the expected result.
\end{proof}

\vspace{0.5cm}
\begin{figure}[!htb]
                \begin{minipage}{0.23\textwidth}
                    \centering
                    \begin{tikzpicture}[thick, scale=1.0, every node/.style={scale=0.98}]]
                       \draw[solid](0,2)--(0,0);
                      \draw[solid](0,2)--(2.8,0);

                        \draw[solid](0.7,2)--(0.7,0);
                      \draw[solid](0.7,2)--(1.4,0);
                       \draw[solid](0.7,2)--(2.1,0);

                         \draw[solid](1.4,2)--(1.4,0);
                       \draw[solid](1.4,2)--(2.1,0);

                       \draw[solid](2.1,2)--(2.1,0);

                        \shade [shading=ball, ball color=black] (0,2) circle (.07) node [above] {\scriptsize$x_{i_1}$};
                        \shade [shading=ball, ball color=black] (0.7,2) circle (.07) node [above] {\scriptsize$x_2$};
                        \shade [shading=ball, ball color=black] (1.4,2) circle (.07) node [above] {\scriptsize$x_3$};
                         \shade [shading=ball, ball color=black] (2.1,2) circle
                         (.07) node [above] {\scriptsize$x_4$};

                        \shade [shading=ball, ball color=black] (0,0) circle (.07) node [below] {\scriptsize$y_{i_1}$};
                        \shade [shading=ball, ball color=black] (0.7,0) circle (.07) node [below] {\scriptsize$y_2$};
                        \shade [shading=ball, ball color=black] (1.4,0) circle (.07) node [below] {\scriptsize$y_3$};
                        \shade [shading=ball, ball color=black] (2.1,0) circle (.07) node [below] {\scriptsize$y_4$};
                        \shade [shading=ball, ball color=black] (2.8,0) circle (.07) node [below] {\scriptsize$y_i$};

                    \end{tikzpicture}
                 \centerline{(a)}
                \end{minipage}\hfill
       \begin{minipage}{0.23\textwidth}
                    \centering
                    \begin{tikzpicture}[thick, scale=1.0, every node/.style={scale=0.98}]]
                      \draw[solid](0,2)--(0,0);
                      \draw[solid](0,2)--(0.7,0);
                      \draw[solid](0,2)--(1.4,0);
                       \draw[solid](0.7,2)--(0.7,0);
                       \draw[solid](0.7,2)--(1.4,0);
                       \draw[solid](1.4,2)--(1.4,0);

                       \shade [shading=ball, ball color=black] (0,2) circle (.07) node [above] {\scriptsize$x_2$};
                       \shade [shading=ball, ball color=black] (0.7,2) circle (.07) node [above] {\scriptsize$x_3$};
                       \shade [shading=ball, ball color=black] (1.4,2) circle (.07) node [above] {\scriptsize$x_4$};

                       \shade [shading=ball, ball color=black] (0,0) circle (.07)node [below] {\scriptsize$y_2$};
                       \shade [shading=ball, ball color=black] (0.7,0) circle (.07)node [below] {\scriptsize$y_3$};
                       \shade [shading=ball, ball color=black] (1.4,0) circle (.07)node [below] {\scriptsize$y_4$};
                    \end{tikzpicture}
                    \centerline{$H\backslash N_H[x_{i_1}]$}
        \end{minipage}\hfill
        \begin{minipage}{0.25\textwidth}
                    \centering
                    \begin{tikzpicture}[thick, scale=1.0, every node/.style={scale=0.98}]]
                     \draw[solid](0,2)--(0,0);
                      \draw[solid](0,2)--(1.2,0);
                      \draw[solid](0,2)--(3,0);

                        \draw[solid](0.6,2)--(0.6,0);
                      \draw[solid](0.6,2)--(1.2,0);
                       \draw[solid](0.6,2)--(1.8,0);
                        \draw[solid](0.6,2)--(2.4,0);

                         \draw[solid](1.2,2)--(1.2,0);

                       \draw[solid](1.8,2)--(1.8,0);

                        \draw[solid](2.4,2)--(2.4,0);

                        \shade [shading=ball, ball color=black] (0,2) circle (.07) node [above] {\scriptsize$x_{i_1}$};
                        \shade [shading=ball, ball color=black] (0.6,2) circle (.07) node [above] {\scriptsize$x_2$};
                        \shade [shading=ball, ball color=black] (1.2,2) circle (.07) node [above] {\scriptsize$x_3$};
                         \shade [shading=ball, ball color=black] (1.8,2) circle
                         (.07) node [above] {\scriptsize$x_4$};
                         \shade [shading=ball, ball color=black] (2.4,2) circle
                         (.07) node [above] {\scriptsize$x_5$};

                        \shade [shading=ball, ball color=black] (0,0) circle (.07) node [below] {\scriptsize$y_{i_1}$};
                        \shade [shading=ball, ball color=black] (0.6,0) circle (.07) node [below] {\scriptsize$y_2$};
                        \shade [shading=ball, ball color=black] (1.2,0) circle (.07) node [below] {\scriptsize$y_3$};
                        \shade [shading=ball, ball color=black] (1.8,0) circle (.07) node [below] {\scriptsize$y_4$};
                        \shade [shading=ball, ball color=black] (2.4,0) circle (.07) node [below] {\scriptsize$y_5$};
                        \shade [shading=ball, ball color=black] (3,0) circle (.07) node [below] {\scriptsize$y_i$};
                    \end{tikzpicture}
                    \centerline{(b)}
        \end{minipage}\hfill
           \begin{minipage}{0.24\textwidth}
                    \centering
                    \begin{tikzpicture}[thick, scale=1.0, every node/.style={scale=0.98}]]
                        \draw[solid](4.2,2)--(4.2,0);
                       \draw[solid](4.2,2)--(5.6,0);
                       \draw[solid](4.2,2)--(6.3,0);

                         \draw[solid](5.6,2)--(5.6,0);

                       \draw[solid](6.3,2)--(6.3,0);

                       \shade [shading=ball, ball color=black] (4.2,2) circle (.07) node [above] {\scriptsize$x_2$};
                       \shade [shading=ball, ball color=black] (4.9,2) circle (.07) node [above] {\scriptsize$x_3$};
                       \shade [shading=ball, ball color=black] (5.6,2) circle (.07) node [above] {\scriptsize$x_4$};
                        \shade [shading=ball, ball color=black] (6.3,2) circle (.07) node [above] {\scriptsize$x_5$};

                       \shade [shading=ball, ball color=black] (4.2,0) circle (.07)node [below] {\scriptsize$y_2$};
                       \shade [shading=ball, ball color=black] (5.6,0) circle (.07)node [below] {\scriptsize$y_4$};
                       \shade [shading=ball, ball color=black] (6.3,0) circle (.07)node [below] {\scriptsize$y_5$};
                    \end{tikzpicture}
                    \centerline{$H\backslash N_H[x_{i_1}]$}
        \end{minipage}\hfill
          \caption{}
            \end{figure}
\begin{figure}[!htb]
    \begin{minipage}{0.25\textwidth}
        \centering
        \begin{tikzpicture}[thick, scale=1.0, every node/.style={scale=0.98}]]
          \draw[solid](0,2)--(0,0);
                      \draw[solid](0,2)--(0,0);
                      \draw[solid](0,2)--(0.6,0);
                      \draw[solid](0,2)--(1.2,0);
                      \draw[solid](0,2)--(3,0);

                        \draw[solid](0.6,2)--(0.6,0);

                       \draw[solid](1.2,2)--(1.2,0);

                       \draw[solid](1.8,2)--(1.8,0);
                       \draw[solid](1.8,2)--(3,0);

                       \draw[solid](2.4,2)--(2.4,0);
                        \draw[solid](2.4,2)--(3,0);

                        \shade [shading=ball, ball color=black] (0,2) circle (.07) node [above] {\scriptsize$x_{i_1}$};
                        \shade [shading=ball, ball color=black] (0.6,2) circle (.07) node [above] {\scriptsize$x_2$};
                        \shade [shading=ball, ball color=black] (1.2,2) circle (.07) node [above] {\scriptsize$x_3$};
                         \shade [shading=ball, ball color=black] (1.8,2) circle
                         (.07) node [above] {\scriptsize$x_4$};
                         \shade [shading=ball, ball color=black] (2.4,2) circle
                         (.07) node [above] {\scriptsize$x_5$};

                        \shade [shading=ball, ball color=black] (0,0) circle (.07) node [below] {\scriptsize$y_{i_1}$};
                        \shade [shading=ball, ball color=black] (0.6,0) circle (.07) node [below] {\scriptsize$y_2$};
                        \shade [shading=ball, ball color=black] (1.2,0) circle (.07) node [below] {\scriptsize$y_3$};
                        \shade [shading=ball, ball color=black] (1.8,0) circle (.07) node [below] {\scriptsize$y_4$};
                        \shade [shading=ball, ball color=black] (2.4,0) circle (.07) node [below] {\scriptsize$y_5$};
                        \shade [shading=ball, ball color=black] (3,0) circle (.07) node [below] {\scriptsize$y_i$};

        \end{tikzpicture}
        \centerline{(c)}
    \end{minipage}\hfill
     \begin{minipage}{0.25\textwidth}
                    \centering
                    \begin{tikzpicture}[thick, scale=1.0, every node/.style={scale=0.98}]]

                      \draw[solid](0.6,2)--(0.6,0);
                      \draw[solid](1.2,2)--(1.2,0);

                      \draw[solid](1.8,2)--(1.8,0);
                      \draw[solid](1.8,2)--(3,0);

                       \draw[solid](2.4,2)--(2.4,0);
                      \draw[solid](2.4,2)--(3,0);

                         \shade [shading=ball, ball color=black] (0.6,2) circle (.07) node [above] {\scriptsize$x_2$};
                        \shade [shading=ball, ball color=black] (1.2,2) circle (.07) node [above] {\scriptsize$x_3$};
                        \shade [shading=ball, ball color=black] (1.8,2) circle (.07) node [above] {\scriptsize$x_4$};
                        \shade [shading=ball, ball color=black] (2.4,2) circle (.07) node [above] {\scriptsize$x_5$};

                          \shade [shading=ball, ball color=black] (0,0) circle (.07) node [below] {\scriptsize$y_{i,1}$};
                    \shade [shading=ball, ball color=black] (0.6,0) circle (.07) node [below] {\scriptsize$y_2$};
                     \shade [shading=ball, ball color=black] (1.2,0) circle (.07) node [below] {\scriptsize$y_3$};
                      \shade [shading=ball, ball color=black] (1.8,0) circle (.07) node [below] {\scriptsize$y_4$};
                       \shade [shading=ball, ball color=black] (2.4,0) circle (.07) node [below] {\scriptsize$y_5$};
                         \shade [shading=ball, ball color=black] (3,0) circle (.07) node [below] {\scriptsize$y_i$};
                    \end{tikzpicture}
                    \centerline{$H\backslash x_{i_1}$}
        \end{minipage}\hfill
        \begin{minipage}{0.24\textwidth}
                    \centering
                    \begin{tikzpicture}[thick, scale=1.0, every node/.style={scale=0.98}]]
                      \draw[solid](1.2,2)--(1.2,0);
                      \draw[solid](1.8,2)--(1.8,0);

                       \shade [shading=ball, ball color=black] (0,2) circle (.07) node [above] {\scriptsize$x_2$};
                           \shade [shading=ball, ball color=black] (0.6,2) circle (.07) node [above] {\scriptsize$x_3$};
                            \shade [shading=ball, ball color=black] (1.2,2) circle (.07) node [above] {\scriptsize$x_4$};
                                \shade [shading=ball, ball color=black] (1.8,2) circle (.07) node [above] {\scriptsize$x_5$};

                             \shade [shading=ball, ball color=black] (1.2,0) circle (.07) node [below] {\scriptsize$y_4$};
                            \shade [shading=ball, ball color=black] (1.8,0) circle (.07) node [below] {\scriptsize$y_5$};
                    \end{tikzpicture}
                    \centerline{$H\backslash N_H[x_{i_1}]$}
        \end{minipage}\hfill
\end{figure}
 \begin{figure}[!htb]
                \begin{minipage}{0.25\textwidth}
                    \centering
                    \begin{tikzpicture}[thick, scale=1.0, every node/.style={scale=0.98}]]

                    \draw[solid](0,-1)--(0,-3);
                   \draw[solid](0,-1)--(2.7,-3);

                        \draw[solid](0.7,-1)--(0.7,-3);
                         \draw[solid](0.7,-1)--(1.4,-3); \draw[solid](0.7,-1)--(2.7,-3);

                       \draw[solid](1.4,-1)--(1.4,-3);
                       \draw[solid](1.4,-1)--(2.7,-3);

                       \draw[solid](2.1,-1)--(2.1,-3);
                       \draw[solid](2.1,-1)--(2.7,-3);

                     \shade [shading=ball, ball color=black] (0,-1) circle
                         (.07) node [above] {\scriptsize$x_{i_1}$};
                         \shade [shading=ball, ball color=black] (0.7,-1) circle
                         (.07) node [above] {\scriptsize$x_2$};
                         \shade [shading=ball, ball color=black] (1.4,-1) circle
                         (.07) node [above] {\scriptsize$x_3$};
                         \shade [shading=ball, ball color=black] (2.1,-1) circle
                         (.07) node [above] {\scriptsize$x_4$};

                      \shade [shading=ball, ball color=black] (0,-3) circle (.07) node [below] {\scriptsize$y_{i_1}$};
                           \shade [shading=ball, ball color=black] (0.7,-3) circle (.07) node [below] {\scriptsize$y_2$};
                            \shade [shading=ball, ball color=black] (1.4,-3) circle (.07) node [below] {\scriptsize$y_3$};
                             \shade [shading=ball, ball color=black] (2.1,-3) circle (.07) node [below] {\scriptsize$y_4$};
                              \shade [shading=ball, ball color=black] (2.7,-3) circle (.07) node [below] {\scriptsize$y_i$};

                    \end{tikzpicture}
                    \subcaption*{$(d)$}
                \end{minipage}\hfill
              \begin{minipage}{0.24\textwidth}
                    \centering
                    \begin{tikzpicture}[thick, scale=1.0, every node/.style={scale=0.98}]]
                          \draw[solid](0.7,2)--(0.7,0);
                      \draw[solid](0.7,2)--(1.4,0);
                      \draw[solid](0.7,2)--(2.7,0);

                        \draw[solid](1.4,2)--(1.4,0);
                        \draw[solid](1.4,2)--(2.7,0);

                        \draw[solid](2.1,2)--(2.1,0);
                        \draw[solid](2.1,2)--(2.7,0);

                          \shade [shading=ball, ball color=black] (0.7,2) circle
                         (.07) node [above] {\scriptsize$x_2$};
                         \shade [shading=ball, ball color=black] (1.4,2) circle
                         (.07) node [above] {\scriptsize$x_3$};
                         \shade [shading=ball, ball color=black] (2.1,2) circle
                         (.07) node [above] {\scriptsize$x_4$};

                         \shade [shading=ball, ball color=black] (0,0) circle (.07) node [below] {\scriptsize$y_{i_1}$};
                    \shade [shading=ball, ball color=black] (0.7,0) circle (.07) node [below] {\scriptsize$y_2$};
                     \shade [shading=ball, ball color=black] (1.4,0) circle (.07) node [below] {\scriptsize$y_3$};
                     \shade [shading=ball, ball color=black] (2.1,0) circle (.07) node [below] {\scriptsize$y_4$};
                      \shade [shading=ball, ball color=black] (2.7,0) circle (.07) node [below] {\scriptsize$y_i$};
                    \end{tikzpicture}
                    \centerline{$H\backslash x_{i_1}$}
        \end{minipage}\hfill
         \begin{minipage}{0.24\textwidth}
                    \centering
                    \begin{tikzpicture}[thick, scale=1.0, every node/.style={scale=0.98}]]
                        \draw[solid](0,2)--(0,0);
                        \draw[solid](0,2)--(0.6,0);

                        \draw[solid](0.6,2)--(0.6,0);

                        \draw[solid](1.2,2)--(1.2,0);

                     \shade [shading=ball, ball color=black] (0,2) circle
                         (.07) node [above] {\scriptsize$x_2$};
                     \shade [shading=ball, ball color=black] (0.6,2) circle
                         (.07) node [above] {\scriptsize$x_3$};
                     \shade [shading=ball, ball color=black] (1.2,2) circle
                         (.07) node [above] {\scriptsize$x_4$};

                    \shade [shading=ball, ball color=black] (0,0) circle (.07) node [below] {\scriptsize$y_2$};
                    \shade [shading=ball, ball color=black] (0.6,0) circle (.07) node [below] {\scriptsize$y_3$};
                    \shade [shading=ball, ball color=black] (1.2,0) circle (.07) node [below] {\scriptsize$y_4$};
                    \end{tikzpicture}
                    \centerline{$H\backslash N_H[x_{i_1}]$}
        \end{minipage}\hfill
                \caption{}
            \end{figure}

\vspace{2cm}

\begin{Remark}
Let  $G$ be a  C-M bipartite graph with  a leaf $u$. Let $N_G(u)=\{v\}$  with  $\deg_G(v)=1$. Then $G\backslash u$ is the disjoint union of $G\backslash \{u,v\}$ and an isolated vertex $v$. So $\depth(S_{G\backslash u}/I_{G\backslash u})=1+\depth(S_{G\backslash \{u,v\}}/I_{G\backslash \{u,v\}})=\depth(S_G/I_G)$.
\end{Remark}

\begin{Lemma}{\em (\cite[Lemma 3.5]{BBH})}
	\label{lem:induced-reg} Let $G$ be a simple graph and $H$ be its induced subgraph. Then $\reg(I_H)\le \reg(I_G)$.
\end{Lemma}

\begin{Lemma}
	\label{reg of S/J-1}
	Let  $G=(V,E)$ be a  C-M bipartite graph with  a leaf $u$. Let $N_G(u)=\{v\}$, $J=(N_G(v))+I_{G \backslash N_G[v]}$ and  $K=(v)+I_{G \backslash v}$. Then
	\begin{enumerate}
		\item[(1)]  $\reg(S_G/J) \leq \reg(S_G/I_{G})-1$;
		\item[(2)] $\reg(S_G/K) \leq \reg(S_G/I_{G})$.
	\end{enumerate}
\end{Lemma}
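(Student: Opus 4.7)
The plan is to deduce both inequalities from the induced-subgraph regularity bound of Lemma \ref{lem:induced-reg}, after identifying $S_G/J$ and $S_G/K$ with edge-ideal quotients attached to suitable induced subgraphs of $G$.

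For (2), the identification is immediate: the variable $v$ does not appear in any generator of $I_{G\backslash v}$, so $S_G/K \cong S_{G\backslash v}/I_{G\backslash v}$. Since $G\backslash v$ is an induced subgraph of $G$, Lemma \ref{lem:induced-reg} gives $\reg(I_{G\backslash v})\le \reg(I_G)$, and therefore $\reg(S_G/K)\le \reg(S_G/I_G)$.

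For (1), by Lemma \ref{decomposition}(\ref{decomposition-4}) one has $\reg(S_G/J) = \reg(S_G/(J+K))$; and since $J+K = (N_G[v]) + I_{G\backslash N_G[v]}$, killing the variables in $N_G[v]$ shows $\reg(S_G/J) = \reg(S_{G\backslash N_G[v]}/I_{G\backslash N_G[v]})$. The key trick is to bring the leaf edge $\{u,v\}$ back into the picture. I would let $H$ be the induced subgraph of $G$ on the vertex set $V(G\backslash N_G[v])\cup\{u,v\}$. Because $u$ is a leaf whose only neighbor is $v$, and no vertex of $G\backslash N_G[v]$ is adjacent to $v$, the subgraph $H$ decomposes as the disjoint union of $G\backslash N_G[v]$ and the edge $\{u,v\}$. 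This structural observation, which is the heart of the argument, uses the leaf hypothesis in an essential way.

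Lemma \ref{sum}(\ref{sum-1}) then gives
\[
\reg(S_H/I_H) = \reg(S_{G\backslash N_G[v]}/I_{G\backslash N_G[v]}) + \reg(\KK[u,v]/(uv)) = \reg(S_G/J) + 1,
\]
using $\reg(\KK[u,v]/(uv)) = 1$. On the other hand, $H$ is an induced subgraph of $G$, so Lemma \ref{lem:induced-reg} gives $\reg(S_H/I_H)\le \reg(S_G/I_G)$. Combining the two bounds yields $\reg(S_G/J) + 1 \le \reg(S_G/I_G)$, which is the desired inequality for (1). Note that neither bound actually uses the Cohen-Macaulay hypothesis on $G$ directly; it is built into the setup because it is needed for the subsequent applications of the lemma.
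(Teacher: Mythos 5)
Your proof is correct, but for part (1) it takes a genuinely different route from the paper. The paper's argument stays inside the Cohen--Macaulay framework: it identifies $\reg(S_G/J)$ with $\vartheta(H)$ for the C-M graph $H=G\backslash\{x_{i_1},\dots,x_{i_t},y_{i_1},\dots,y_{i_t}\}$ (using Lemma \ref{cm subgraph}(\ref{cm subgraph-2}) and Lemma \ref{depth-reg}(\ref{depth-reg-2})), and then enlarges an induced matching of $H$ by the leaf edge $\{u,v\}$ to get $\vartheta(G)\ge\vartheta(H)+1$. You instead glue the leaf edge back as a disjoint component of the induced subgraph on $V(G\backslash N_G[v])\cup\{u,v\}$, apply the additivity of regularity over disjoint components (Lemma \ref{sum}(\ref{sum-1})) and the induced-subgraph bound (Lemma \ref{lem:induced-reg}). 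The two arguments exploit the same combinatorial fact --- that $\{u,v\}$ is disjoint from, and non-adjacent to, $G\backslash N_G[v]$ --- but your packaging avoids the C-M hypothesis altogether, so your version of (1) and (2) holds for an arbitrary graph with a leaf, which is a mild strengthening; the paper's version, by contrast, records the stronger equality $\reg(S_G/J)=\vartheta(H)$, which it reuses later (e.g.\ in Lemma \ref{reg of S/J} and Theorem \ref{yang}). One small point to patch: Lemma \ref{sum}(\ref{sum-1}) is stated for two \emph{non-zero} ideals, so in the degenerate case where $G\backslash N_G[v]$ has no edges (this does occur, e.g.\ when $N_G(v)$ is all of one side of the bipartition) you cannot cite it literally; there, however, $\reg(S_G/J)=0$ while $\reg(S_G/I_G)\ge 1$ because $I_G$ contains the edge $uv$, so (1) holds trivially and your argument goes through after separating this case.
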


\begin{proof} It is clear that $\reg(S_G/K)\leq \reg(S_G/I_{G})$ and   $\reg(S_G/J)\le \reg(S_G/I_{G})$ by  Lemma \ref{lem:induced-reg}, since $\reg(S_G/K)=\reg(S_{G \backslash v}/I_{G \backslash v})$, $\reg(S_G/J)=\reg(S_{G \backslash N_G[v]}/I_{G \backslash N_G[v]})$ and both
	$G \backslash v$ and $G \backslash N_G[v]$ are induced subgraphs of $G$.

	Let $V=X \sqcup Y$ with  $X=\{x_1,x_2,\dots,x_n\}$, $Y=\{y_1,y_2,\dots,y_n\}$  and  $u=x_{\ell}$.  Suppose $N_G(u)=\{v\}$ and  $N_G(v)=\{x_{i_1},\dots,x_{i_t}\}$ with  $1\leq i_1<i_2<\dots<i_t=\ell$.  Two cases are discussed below:
	
(i) If $N_G(v)= X$, then  $G \backslash N_G[v]=Y \backslash \{v\}$  consists of  isolated points. Hence, $I_{G \backslash N_G[v]}=0$, which  implies that $\reg(S_G/J)=\reg(S_{G \backslash N_G[v]}/I_{G \backslash N_G[v]})=0$.

 (ii) If $N_G(v)\subsetneq X$, then  $G \backslash N_G[v]$ is the disjoint union of a graph $H$ and the isolated set $\{y_{i_1},y_{i_2},\dots,y_{i_{t-1}}\}$, where  $H=G\backslash\{x_{i_1},\dots,x_{i_t},y_{i_1},\dots,y_{i_t}\}$. So by  Lemma \ref{depth-reg}(\ref{depth-reg-2})  we have
 \[
 \reg(S_G/J)=\reg(S_{G \backslash N_G[v]}/I_{G \backslash N_G[v]})=\reg(S_H/I_H)=\vartheta(H),
 \]
since  $H$ is a C-M bipartite graph by Lemma \ref{cm subgraph}(\ref{cm subgraph-2}). Let $M=\{e_1,e_2,\dots,e_{\vartheta(H)}\}$ be an induced matching of $H$ and $e=\{u,v\}$. Since $V(H)\cap N_G[v]=\emptyset$ and  $u$ is  a leaf with $N_G(u)=\{v\}$,
we have  $M\cap e=\emptyset$, which  implies  $M\sqcup \{e\}$ is  an induced matching of $G$.  Hence, in this case, $\reg(S_G/I_{G})\ge \vartheta(H)+1$,  establishing
the claim.
\end{proof}

\begin{Lemma}
\label{reg of S/J}
 Let $G=(V,E)$ be a  C-M bipartite graph  with  a leaf $u$. Let $N_G(u)=\{v\}$ with  $\deg_G(v) \geq 2$. Suppose  $V=X \sqcup Y$  where $X=\{x_1,x_2,\dots,x_n\}$ and $Y=\{y_1,y_2,\dots,y_n\}$.
  Let $u=y_{i_1}$,  $N_G(v)=\{y_{i_1},y_{i_2},\dots,y_{i_t}\}$ with $1\leq i_1 <i_2<\dots<i_t \leq n$ and $w=y_{i_t}$. Suppose also that $J=(N_G(w))+I_{G \backslash N_G[w]}$,  $K=(w)+I_{G \backslash w}$.  If $\vartheta(G \backslash v) = \vartheta(G)-1$, then
\begin{enumerate}
 \item[(1)]  $\reg(S_G/J) \leq \reg(S_G/I_{G})-2$;
  \item[(2)] $\reg(S_G/K) = \reg(S_G/I_{G})$.
  \end{enumerate}
\end{Lemma}

\begin{proof}Since $N_G(v)=\{y_{i_1},y_{i_2},\dots,y_{i_t}\}$ with $1\leq i_1 <i_2<\dots<i_t \leq n$, we get that $x_{i_t}$ is a leaf and $N_G(x_{i_t})=\{w\}$. 	Let $N_G(w)=\{x_{j_1},x_{j_2},\dots,x_{j_m}\}$ with $1\le j_1<j_2<\dots<j_m=i_t$.
	It follows that $\reg(S_G/K)=\reg(S_{G \backslash w}/I_{G \backslash w})$ and $\reg(S_G/J) \leq\reg(S_G/I_{G})-1$ from Lemma \ref{reg of S/J-1}.
	
(1) Conversely, if $\reg(S_G/J)=\reg(S_G/I_{G})-1$, then by Lemma \ref{depth-reg}, we have
\[
\reg(S_G/J)=\vartheta(G)-1. \hspace{3.5cm} (\dag)
\]
We distinguish between the following two cases:

(i) If $N_G(w)= X$, then  $G \backslash N_G[w]=Y \backslash \{w\}$ is consists of  isolated points.
It follows that $\reg(S_G/J)=\reg(S_{G \backslash N_G[w]}/I_{G \backslash N_G[w]})=0$,
which implies $\vartheta(G)=1$ by formula (\dag). Note that $\deg_G(v) \geq 2$, we have $\reg(S_{G \backslash \{u,v\}}/I_{G \backslash \{u,v\}}) \geq 1$. On the other hand, since
 $G \backslash \{u,v\}$ is a C-M bipartite graph by  Lemma \ref{cm subgraph}(\ref{cm subgraph-1}), it follows from  Lemma \ref{depth-reg}(\ref{depth-reg-2})  that  $\reg(S_{G \backslash \{u,v\}}/I_{G \backslash \{u,v\}})=\vartheta(G \backslash \{u,v\})=\vartheta(G \backslash v)=\vartheta(G)-1$.  Thus $\vartheta(G)\ge 2$, a contradiction to $\vartheta(G)=1$.

(ii) If $N_G(w) \subsetneq X$.  Then $G \backslash N_G[w]$ is the disjoint union of $H$ and the  isolated set $\{y_{j_1},y_{j_2},\dots,y_{j_{m-1}}\}$, where $H=G\backslash \{x_{j_1},\dots,x_{j_m},y_{j_1},\dots,y_{j_m}\}$ is C-M bipartite graph  by  Lemma \ref{cm subgraph}(\ref{cm subgraph-2}).  So by  Lemma \ref{depth-reg}(\ref{depth-reg-2})  we have
\[
\reg(S_G/J)=\reg(S_{G \backslash N_G[u]}/I_{G \backslash N_G[u]})=\reg(S_H/I_H)=\vartheta(H).
\]
It follows that  $\vartheta(H)=\vartheta(G)-1$  by formula (\dag). Let $M=\{e_1,e_2,\dots,e_{\vartheta(G)-1}\}$ be an induced matching of $H$ and $e=\{x_{i_t},w\}$.
Since $V(H)\cap N_G[w]=\emptyset$ and  $x_{i_t}$ is  a leaf with  $N_G(x_{i_t})=\{w\}$,
we have  $M\cap e=\emptyset$, which implies  that $M\sqcup \{e\}$ is an induced matching of $G\backslash v$. Note that the size of $M\sqcup \{e\}$ is $\vartheta(G)$,  which contradicts $\vartheta(G \backslash v) = \vartheta(G)-1$.

(2) Let $M=\{e_1,e_2,\dots,e_{\vartheta(G)}\}$ be any  induced matching of $G$.
Claim:  $\{u,v\}\in M$.

Indeed, if $\{u,v\}\notin M$, then $v$ is a vertex of $e_i$ for some $i\in [\vartheta(G)]$, since $\vartheta(G \backslash v)=\vartheta(G)-1$. Let $e_i=\{v,y_{i_\ell}\}$,  then $x_{i_\ell}\cap e_j=\emptyset$ for any $e_j\in M$ with $j\ne i$. Conversely,  if $x_{i_\ell}\cap e_j\neq \emptyset$  for some $e_j\in M$ with $j\ne i$, then we choose edge $e=\{x_{i_\ell},y_{i_\ell}\}$, thus $e\cap e_i=\{y_{i_\ell}\}$
and $e\cap e_j=\{x_{i_\ell}\}$, which contradicts with $M$ being an  induced matching  of $G$. This implies that
$x_{i_\ell}$  cannot be a vertex of any edge in the set $M\backslash \{e_i\}$.  Substituting edge $\{x_{i_\ell},y_{i_\ell}\}$ for $e_i$ yields an induced match of $G \backslash v$.
 Consequently, $\vartheta(G \backslash v)=\vartheta(G)$,  which contradicts with $\vartheta(G \backslash v) = \vartheta(G)-1$.

 Note that  $w=y_{i_t}$ and $N_G(v)=\{y_{i_1},y_{i_2},\dots,y_{i_t}\}$, thus  $w\in N_G(v)$. Since  $\{u,v\}\in M$, $w$ cannot be a vertex of any edge in $M$ by the definition of induced matching, which implies that $\vartheta(G \backslash w)=\vartheta(G)$. It follows that
$\reg(S_G/K)=\reg(S_{G \backslash w}/I_{G \backslash w})=\reg(S_{G \backslash \{w,x_{i_t}\}}/I_{G \backslash \{w,x_{i_t}\}})=\vartheta(G \backslash \{w,x_{i_t}\})=\vartheta(G \backslash w)=\vartheta(G)= \reg(S_G/I_{G})$ by Lemmas
 \ref{cm subgraph}(\ref{cm subgraph-1}) and  \ref{depth-reg}(\ref{depth-reg-2}).
\end{proof}

\begin{Remark}
\label{aaa}
Let $G=(V,E)$ be a  C-M bipartite graph   with  a leaf $u$. Suppose $N_G(u)=\{v\}$ with  $\deg_G(v) \geq 2$.
If
$\vartheta(G \backslash v)<\vartheta(G)$, then we can obtain  $\reg(S_G/I_G) \geq 2$ by similar arguments as  the subcase
    $(ii)$ in the proof of Lemma \ref{reg of S/J-1}.
\end{Remark}

\begin{Theorem}
\label{V(G_2)=1}
Let $G=(V,E)$ be a  C-M bipartite graph  with  a leaf $u$. Let $N_G(u)=\{v\}$, then
\[
\reg(S_{G\backslash u}/I_{G\backslash u})=\reg(S_{G}/I_{G})-s,
\]
where $s= \begin{cases}
	0, & \text{if $\vartheta(G \backslash v) =\vartheta(G)$},\\
	1, & \text{otherwise.}
\end{cases}$
\end{Theorem}

\begin{proof}
Let $V=X \sqcup Y$ with $X=\{x_1,\dots,x_n\}$, $Y=\{y_1,\dots,y_n\}$ and $u=y_{\ell}$. Assume that $N_G(u)=\{v\}$ and $N_G(v)=\{y_{i_1},y_{i_2},\dots,y_{i_t}\}$ with  $\ell= i_1 <i_2 < \dots <i_t \leq n$.  Let $H=G\backslash u$.

 (1) If $\vartheta(G \backslash v) = \vartheta(G)$. In this case, we suppose
 $J=(N_H(v))+I_{H \backslash N_H[v]}$,  $K=(v)+I_{H \backslash v}$, then $I_{H}=J\cap K$ and  $H\backslash \{x_{i_1},x_{i_2},y_{i_2},\dots,x_{i_t},y_{i_t}\}=G\backslash \{x_{i_1},y_{i_1},
\dots,x_{i_t},y_{i_t}\}$ is a C-M bipartite graph by  Lemma \ref{cm subgraph}(\ref{cm subgraph-2}). Thus by Lemma \ref{reg of S/J-1}(1), we have $\reg(S_H/J)=\reg(S_{H\backslash N_H[v]}/I_{H\backslash N_H[v]})=\reg(S_{G\backslash N_G[v]}/I_{G\backslash N_G[v]}) \leq \reg(S_G/I_G)-1$.
  Meanwhile, $H \backslash v=G\backslash \{u,v\}$ is a C-M bipartite graph  by Lemma \ref{cm subgraph} (\ref{depth-reg-2}). Thus $\reg(S_H/K)=\reg(S_{H\backslash v}/I_{H\backslash v})=\reg(S_{G\backslash \{u,v\}}/I_{G\backslash \{u,v\}})=\vartheta(G \backslash \{u,v\})=\vartheta(G \backslash v)=\reg(S_G/I_G)$.
 By Lemmas \ref{lem:direct_sum}(2),  \ref{exact}(\ref{exact-4}),
  \ref{decomposition}(\ref{decomposition-4}) and the  exact sequence (\ref{eqn:SES-1}),
 we obtain $\reg(S_{G\backslash u}/I_{G\backslash u})=\reg(S_{G}/I_{G})$.

 $(2)$ If $\vartheta(G \backslash v) \neq \vartheta(G)$. We prove the statement by induction on $\deg_G(v)$.

 (i) If $\deg_G(v)=1$, then $H$ is the disjoint union of  $G\backslash \{u,v\}$ and isolated vertice $v$, and  $G\backslash \{u,v\}$ is a C-M bipartite graph by  Lemma
 \ref{cm subgraph}(\ref{cm subgraph-1}). Thus,
  $\reg(S_H/I_{H})=\reg(S_{G\backslash \{u,v\}}/I_{G\backslash \{u,v\}})=\vartheta(G \backslash \{u,v\})=\vartheta(G \backslash v)=\vartheta(G)-1= \reg(S_{G}/I_{G})-1$.

(ii) If $\deg_G(v) \geq 2$. Let $w=y_{i_t}$ and $N_G(w)=\{x_{j_1},x_{j_2},\dots,x_{j_m}\}$, where $1 \leq j_1 <j_2<\dots<j_m=i_t $. In this case, let $J=(N_H(w))+I_{H \backslash N_H[w]}$,  $K=(w)+I_{H \backslash w}$, then $I_{H}=J\cap K$. We divide into the following two cases for $H \backslash N_H[w]$:

(a) If $N_H[w]=X$, then $H \backslash N_H[w] =Y\backslash \{u,w\}$ consists of isolated points. Hence, $I_{H \backslash N_H[w]}=0$, which implies  $\reg(S/J)=\reg(S_{H \backslash N_H[u]}/I_{H \backslash N_H[u]})=0\le \reg(S_{G}/I_{G})-2$ by Remark \ref{aaa}.

(b) If $N_H[w]\subsetneq X$, then $H \backslash N_H[w]$ is the disjoint union of $H'$ and isolated set $\{y_{j_1},y_{j_2},\dots, y_{j_{t-1}}\} \backslash u$,
 where $H'= H\backslash \{x_{j_1},y_{j_1},\dots,x_{j_m},y_{j_m}\}$. By Lemma \ref{cm subgraph}(\ref{cm subgraph-2}), we have
 $H\backslash \{x_{j_1},y_{j_1},\dots,x_{j_m},y_{j_m}\}=G\backslash \{x_{j_1},y_{j_1},\dots,x_{j_m},y_{j_m}\}$ is a C-M bipartite graph. Thus $\reg(S_H/J)=\reg(S_{H\backslash N_H[w]}/I_{H\backslash N_H[w]})
 =\reg(S_{G\backslash N_G[w]}/I_{G\backslash N_G[w]})\\ \leq \reg(S_{G}/I_{G})-2$ by Lemma \ref{reg of S/J}(1).

In order to  compute  $\reg(S_H/K)$, we apply induction on $\deg_G(v)$.

If $\deg_G(v)= 2$, $H \backslash w$ is the disjoint union of $G\backslash \{x_{i_1},y_{i_1},x_{i_t},y_{i_t}\}$ and isolated set $\{x_{i_1},x_{i_t}\}$, then $\reg(S_H/K) =\reg(S_{H\backslash w}/I_{H\backslash w})=\reg(S_{G\backslash \{x_{i_1},y_{i_1},x_{i_t},y_{i_t}\}}/I_{G\backslash \{x_{i_1},y_{i_1},x_{i_t},y_{i_t}\}})$ $=\vartheta(G \backslash \{x_{i_1},y_{i_1},x_{i_t},y_{i_t}\})=\vartheta(G)-1=\reg(S_G/I_G)-1$.

Now assume that $\deg_G(v)\geq 3$. Let $G'=G\backslash \{x_{i_t},y_{i_t}\}$, then $G'$ is a  C-M bipartite graph with a leaf $x_{i_{t-1}}$ and  $\deg_{G'}(v)=\deg_G(v)-1$. Meanwhile, $H \backslash w$ is the disjoint union of $G'\backslash u$ and isolated point $x_{i_t}$.  Thus
 \begin{align*}
\reg(S_H/K)&=\reg(S_{H\backslash w}/I_{H\backslash w})=\reg(S_{G'\backslash u}/I_{G'\backslash u})=\reg(S_{G'}/I_{G'})-1\\
 &=\vartheta(G \backslash \{x_{i_t},y_{i_t}\})-1=\reg(S_{G}/I_{G})-1.
\end{align*}
 Note that $I_{H}=J\cap K$, we obtain $\reg(S_{G\backslash u}/I_{G\backslash u})=\reg(S_{G}/I_{G})-s$ by applying Lemmas \ref{lem:direct_sum} and  to  \ref{exact}(\ref{exact-4}) the  exact sequence (\ref{eqn:SES-1}).
\end{proof}

\medskip
\section{the $\circ$ operation and the  $*$ operation}
\label{sec:operation}
In this section,  we will study some   graphs  obtained from  Cohen-Macaulay bipartite graphs by the $\circ$ operation or the  $*$ operation. The main task of this section is to give some exact formulas for the depth and regularity  of the  edge ideals of such graphs.  We start by
recalling from \cite{BMS,SZ} the two aforementioned special gluing operations.

\begin{Definition}
    \label{circ_*_operations}
    For $i= 1,2$, let $G_i$ be a graph with
    a leaf $u_i$. Furthermore, let  $N_G(u_i)=\{v_i\}$  with $\deg_{G_i}(v_i)\ge 2$.
    \begin{enumerate}
   	 \item[(1)]  Let $G$ be a graph obtained from $G_1$ and $G_2$ by  first removing the leaves $u_1, u_2$, and then identifying the vertices $v_1$ and $v_2$.  In this case, we say that $G$ is obtained from $G_1$ and $G_2$ by the \emph{$\circ$ operation} and write $G=(G_1,u_1) \circ (G_2,u_2)$ or simply $G=G_1 \circ G_2$. If $v_1$ and $v_2$ are identified as the vertex $v$ in $G$, then we also write $G= G_1\circ_v G_2$. Unless otherwise specified, when we perform the $\circ$ operation in this way, we always implicitly assume that neither $G_1$ nor $G_2$ is the path graph $P_2$ of two vertices.

      \item[(2)]  Let $H$ be the graph obtained from $G_1$ and $G_2$ by identifying the vertices $u_1$ and $u_2$. In this case, we say that $H$ is obtained from $G_1$ and $G_2$ by the \emph{$*$ operation} and write $H=(G_1,u_1)*(G_2,u_2)$ or simply $H= G_1 * G_2$. If we denote the identified vertex in $H$ by $u$, then we also write $H= G_1 *_u G_2$.
    \end{enumerate}
\end{Definition}

\begin{Theorem}
 \label{thm:depth_CM_CM_circ}
Let $G=(G_1,u_1) \circ (G_2,u_2)$, where each $G_i$ is a  C-M bipartite graph  with  a leaf $u_i$. Let $N_G(u_i)=\{v_i\}$, then
\[
\depth(S_G/I_{G})=\depth(S_{G_1}/I_{G_1})+\depth(S_{G_2}/I_{G_2})-s,
\]
where $s= \begin{cases}
	1, & \text{if $\deg_{G_i}(v_i)=1$ for all $i\in [2]$},\\
	2, & \text{otherwise.}
\end{cases}$

\end{Theorem}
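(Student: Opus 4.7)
Let $d_i=\depth(S_{G_i}/I_{G_i})=|V(G_i)|/2$ by Lemma~\ref{depth-reg}(\ref{depth-reg-1}), and let $v$ denote the identified vertex of $G$. I plan to dispatch the degenerate scenarios first. If $\deg_{G_i}(v_i)=1$ for both $i$, then each $G_i$ is just the edge $\{u_i,v_i\}$, so after the $\circ$ operation $G$ consists only of the vertex $v$, giving $\depth(S_G/I_G)=1=d_1+d_2-1$, which matches the $s=1$ branch. If exactly one degree equals $1$, say $\deg_{G_1}(v_1)=1$, then $G$ is isomorphic to $G_2\setminus u_2$, and the $s=2$ formula follows directly from Lemma~\ref{subgraph-depth}.

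The substantive case is $\deg_{G_i}(v_i)\ge 2$ for both $i$. I apply Lemma~\ref{decomposition} at the vertex $v$ to obtain the short exact sequence
\[
0\longrightarrow S_G/I_G \longrightarrow S_G/J\oplus S_G/K \longrightarrow S_G/(J+K)\longrightarrow 0,
\]
with $J=(N_G(v))+I_{G\setminus N_G[v]}$ and $K=(v)+I_{G\setminus v}$. The construction of $G$ yields the disjoint-union decompositions
\[
G\setminus v=(G_1\setminus\{u_1,v_1\})\sqcup(G_2\setminus\{u_2,v_2\}),\qquad G\setminus N_G[v]=(G_1\setminus N_{G_1}[v_1])\sqcup(G_2\setminus N_{G_2}[v_2]).
\]
By Lemma~\ref{cm subgraph}(\ref{cm subgraph-1}), each $G_i\setminus\{u_i,v_i\}$ is a C-M bipartite graph on $|V(G_i)|-2$ vertices, so Lemma~\ref{depth-reg}(\ref{depth-reg-1}) combined with Lemma~\ref{sum}(\ref{sum-2}) gives $\depth(S_G/K)=d_1+d_2-2$. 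For the other piece, Lemma~\ref{cm subgraph}(\ref{cm subgraph-2}) exhibits $G_i\setminus N_{G_i}[v_i]$ as the disjoint union of a C-M bipartite graph $H_i$ together with a set of isolated vertices on the opposite side of the bipartition; a direct count analogous to the one inside the proof of Lemma~\ref{subgraph-depth} then yields $\depth(S_{G_i\setminus N_{G_i}[v_i]}/I_{G_i\setminus N_{G_i}[v_i]})=d_i-1$, and hence $\depth(S_G/(J+K))=d_1+d_2-2$. Lemma~\ref{decomposition}(\ref{decomposition-3}) promotes this to $\depth(S_G/J)=d_1+d_2-1$.

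Feeding these three depths into Lemma~\ref{exact}(\ref{exact-2}) produces the lower bound $\depth(S_G/I_G)\ge d_1+d_2-2$. For the matching upper bound, $G$ is a bipartite graph on $|V(G)|=|V(G_1)|+|V(G_2)|-3=2(d_1+d_2)-3$ vertices, so the Morey et al.\ inequality recalled in the introduction yields $\depth(S_G/I_G)\le\lfloor|V(G)|/2\rfloor=d_1+d_2-2$, which closes the argument. The main obstacle is that in the exact sequence the middle and right-hand depths coincide (both equal $d_1+d_2-2$), so Lemma~\ref{exact}(\ref{exact-2}) does not by itself furnish the equality; the external Morey et al.\ upper bound is the indispensable ingredient for closing the gap. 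A secondary delicate point is the careful use of the ordering in Theorem~\ref{cm bipart}(\ref{cm bipart-2}) to identify exactly which vertices become isolated in $G_i\setminus N_{G_i}[v_i]$.
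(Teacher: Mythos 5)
Your treatment of the heart of the theorem, the case $\deg_{G_i}(v_i)\ge 2$ for both $i$, is correct and follows a genuinely different route from the paper's. The paper proves this case by induction on $n_2$, decomposing at a vertex $x_{2,\ell_1}\in N_{G_2}(v)$ chosen via Theorem \ref{cm bipart}(\ref{cm bipart-2}) so that its partner is a leaf of $G_2$; with that choice both $\depth(S_G/J)$ and $\depth(S_G/K)$ come out equal to $d_1+d_2-2$ (write $d_i=\depth(S_{G_i}/I_{G_i})$) while $\depth(S_G/(J+K))=d_1+d_2-3$, so the equality clause of Lemma \ref{exact}(\ref{exact-2}) applies and no outside input is needed. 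You instead decompose at the identified vertex $v$, avoid the induction entirely, and—since in your setup the middle and right terms of the Mayer--Vietoris type sequence have the same depth, as you correctly observe—you close the gap with the Morey--Villarreal bound $\depth(S/I_G)\le\lfloor n/2\rfloor$ quoted in the introduction. Your identifications $G\setminus v=(G_1\setminus\{u_1,v_1\})\sqcup(G_2\setminus\{u_2,v_2\})$ and $G\setminus N_G[v]=(G_1\setminus N_{G_1}[v_1])\sqcup(G_2\setminus N_{G_2}[v_2])$, and the count $\depth\bigl(S_{G_i\setminus N_{G_i}[v_i]}/I_{G_i\setminus N_{G_i}[v_i]}\bigr)=d_i-1$ (isolated partners plus the C-M graph supplied by Lemma \ref{cm subgraph}(\ref{cm subgraph-2}), including the boundary case $N_{G_i}(v_i)=X_i$), are sound. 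The trade-off is a shorter, induction-free argument at the price of an external upper bound.

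Two points need patching before the proof is complete. First, the degenerate cases: a C-M bipartite graph need not be connected, so $\deg_{G_i}(v_i)=1$ only forces the component of $u_i$ to be a single edge, not $G_i$ itself. In the mixed case your claim $G\cong G_2\setminus u_2$ fails in general (correctly $G=(G_1\setminus\{u_1,v_1\})\sqcup(G_2\setminus u_2)$), and as written your computation would return $d_2-1$ rather than $d_1+d_2-2$; similarly ``each $G_i$ is just the edge $\{u_i,v_i\}$'' is unjustified when both degrees are $1$. The fix is exactly the disjoint-union bookkeeping the paper performs in its cases (I) and (II), using Lemmas \ref{cm subgraph}(\ref{cm subgraph-1}), \ref{depth-reg}(\ref{depth-reg-1}) and \ref{subgraph-depth}. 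Second, the Morey--Villarreal inequality is stated for \emph{connected} bipartite graphs, and $G=G_1\circ G_2$ need not be connected if some $G_i$ is not. In your main case $\deg_{G_i}(v_i)\ge 2$ guarantees $G$ has no isolated vertices, so you may apply the bound to each connected component and sum, using $\sum_j\lfloor n_j/2\rfloor\le\lfloor n/2\rfloor$ with $n=2(d_1+d_2)-3$ odd; with that remark the upper bound $d_1+d_2-2$ stands and matches your lower bound. With these two repairs your argument is a valid, and arguably shorter, alternative to the paper's proof.
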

\begin{proof}Let  $V(G_i)=\{x_{i,1},x_{i,2},\dots,x_{i,n_i}\} \sqcup \{y_{i,1},y_{i,2},\dots,y_{i,n_i}\}$ for $i\in [2]$. By symmetry,   we can assume that every $u_i=x_{i,j_i}$ for some $j_i\in [n_i]$, and  $v_i=y_{i,j_i}$ is the only neighbor point of $x_{i,j_i}$ in $G_i$. Suppose  $y_{1,j_1}$ and $y_{2,j_2}$ are identified as $v$ in $G$ by the $\circ$ operation. We distinguish into three cases:

(I) If $\deg_{G_i}(v_i)=1$ for all $i\in [2]$, then $G$   is the disjoint union of $G_1\backslash \{u_1,v_1\}$, $G_2\backslash  \{u_2,v_2\}$ and an isolated vertex $v$. Thus, by Lemmas  \ref{cm subgraph}(\ref{cm subgraph-1}) and \ref{depth-reg}(\ref{depth-reg-1}), we have
\begin{align*}
\depth(S_G/I_{G})&=1+[\depth(S_{G_1}/I_{G_1})-1]+[\depth(S_{G_2}/I_{G_2})-1]\\
&=\depth(S_{G_1}/I_{G_1})+\depth(S_{G_2}/I_{G_2})-1.
\end{align*}

(II) If $\deg_{G_1}(v_1)\ge 2$ and  $\deg_{G_2}(v_2)=1$, then $G=(G_1\backslash u_1)\sqcup (G_2\backslash  \{u_2,v_2\})$. Thus, by Lemmas  \ref{cm subgraph}(\ref{cm subgraph-1}), \ref{depth-reg}(\ref{depth-reg-1}) and
\ref{subgraph-depth}, we have
\begin{align*}
\depth(S_G/I_{G})&=\depth(S_{G_1\backslash u_1}/I_{G_1\backslash u_1})+\depth(S_{G_2\backslash  \{u_2,v_2\}}/I_{G_2\backslash  \{u_2,v_2\}})\\
&=\depth(S_{G_1}/I_{G_1})+\depth(S_{G_2}/I_{G_2})-2.
\end{align*}

(III) If $\deg_{G_i}(v_i)\ge 2$ for all $i\in [2]$. We will prove the statement by induction on $n_2$.
 When $n_2=2$. Let  $N_{G_2}(v_2)=\{x_{2,1},x_{2,2}\}$, where $x_{2,2}=u_2$.
In this case, $G=G_1 \cup _{x_{2,1}} P_2$  is the  clique sum of $G_1$ and a path $P_2$ with vertex set $\{x_{2,1},y_{2,1}\}$. Set $J=(N_{G}(y_{2,1}))+I_{G\backslash N_G[y_{2,1}]}$,  $K=(y_{2,1})+I_{G \backslash y_{2,1}}$. Note that $G\backslash N_G[y_{2,1}]=G_1 \backslash u_1$ and $G \backslash y_{2,1} =G_1$, we obtain  $\depth(S_G/J)=1+\depth(S_{G\backslash N_G[y_{2,1}]}/I_{G\backslash N_G[y_{2,1}]})=1+\depth(S_{G_1 \backslash u_1}/I_{G_1 \backslash u_1})=\depth(S_{G_1}/I_{G_1})$ by Lemma \ref{subgraph-depth},  and $\depth(S_G/K)=\depth(S_{G \backslash y_{2,1}}/I_{G \backslash y_{2,1}})=\depth(S_{G_1}/I_{G_1})$.   Using Lemmas \ref{lem:direct_sum}(1), \ref{exact}(\ref{exact-2}) and   \ref{decomposition}(\ref{decomposition-3}) to the following exact sequence
\begin{equation}
	0\longrightarrow \frac{S_G}{J \cap K}\longrightarrow \frac{S_G}{J} \oplus \frac{S_G}{K}\longrightarrow \frac{S_G}{J+K} \longrightarrow 0
 \label{eqn:SES-2},
\end{equation}	
we get the desired  regularity results.

In the following,  we assume that $n_2\ge 3$.
Let  $N_{G_2}(v)=\{x_{2,\ell_1},x_{2,\ell_2},\dots,x_{2,\ell_s}\}$ with $1\leq \ell_1<\ell_2<\dots<\ell_s=j_2$, then  $\deg_{G_2}(y_{2,\ell_1})=1$ by Theorem  \ref{cm bipart}(\ref{cm bipart-2}).
In this case, let $N_{G}(x_{2,\ell_1})=\{y_{2,k_1},y_{2,k_2},\dots,y_{2,k_t}\}$  with $\ell_1=k_1<k_2<\dots<k_t\leq n_2$ and $k_s=j_2$ for some $s\in[t]$.
Set $J=(N_{G}(x_{2,\ell_1}))+I_{G\backslash N_G[x_{2,\ell_1}]}$,  $K=(x_{2,\ell_1})+I_{G \backslash x_{2,\ell_1}}$.
 We  distinguish between  the following two cases:

(A1)   If $|N_{G_2}(v)|=2$, then $s=2$ and   $G\backslash x_{2,\ell_1}$   is the disjoint union of $G_1\backslash u_1$, $G_2\backslash  \{x_{2,\ell_1},y_{2,\ell_1},x_{2,j_2},y_{2,j_2}\}$ and an isolated vertex $y_{2,\ell_1}$.
So by Lemmas \ref{depth-reg}(\ref{depth-reg-1}) and \ref{subgraph-depth},
 we have
\begin{align*}
\depth(S_G/K)&=1+\depth(S_{G_1\backslash u_1}/I_{G_1\backslash u_1})+\depth(S_{H_1}/I_{H_1})\\
&=1+[\depth(S_{G_1}/I_{G_1})-1]+[\depth(S_{G_2}/I_{G_2})-2]\\
&=\depth(S_{G_1}/I_{G_1})+\depth(S_{G_2}/I_{G_2})-2
\end{align*}
where $H_1=G_2\backslash  \{x_{2,\ell_1},y_{2,\ell_1},x_{2,j_2},y_{2,j_2}\}$.

Meanwhile,   $G\backslash N_G[x_{2,\ell_1}]$  takes one of the following forms:
\begin{enumerate}
 \item[(a)]  $G\backslash N_G[x_{2,\ell_1}]=H\sqcup H_1$, where $H=G_1\backslash \{u_1,v_1\}$;
  \item[(b)] $G\backslash N_G[x_{2,\ell_1}]$  is the disjoint union of $H$, $H_2$ and the isolated set  $\{x_{2,k_2},\dots,x_{2,k_t}\}\backslash \{u_2\}$,  where $H_2=G_2\backslash  \{x_{2,k_1},\dots ,x_{2,k_t},y_{2,k_1}, \dots,y_{2,k_t}\}$.
  \end{enumerate}
Note that $H$, $H_1$ and  $H_2$ are  C-M bipartite graphs. We consider two subcases:

 (i) If $G\backslash N_G[x_{2,\ell_1}]$  is of form (a), then we get by Lemma \ref{depth-reg}(\ref{depth-reg-1}) that
\begin{align*}
 \depth(S_G/J)&=1+\depth(S_{H}/I_{H})+\depth(S_{H_1}/I_{H_1})\\
&=1+[\depth(S_{G_1}/I_{G_1})-1]+[\depth(S_{G_2}/I_{G_2})-2]\\
&=\depth(S_{G_1}/I_{G_1})+\depth(S_{G_2}/I_{G_2})-2.
\end{align*}
  (ii) If  $G\backslash N_G[x_{2,\ell_1}]$  is of form (b), then we also have
  \begin{align*}
	\depth(S_G/J)&=1+\depth(S_{H}/I_{H})+\depth(S_{H_2}/I_{H_2})+(t-2)\\
&=1+[\depth(S_{G_1}/I_{G_1})-1]+(\depth(S_{G_2}/I_{G_2})-t)+(t-2)\\
&=\depth(S_{G_1}/I_{G_1})+\depth(S_{G_2}/I_{G_2})-2.
\end{align*}

(A2)  If $|N_{G_2}(v)|\ge 3$, then  $G\backslash x_{2,\ell_1}$  is the disjoint union of $G'$ and an isolated vertex $y_{2,\ell_1}$, where  $G'=(G_1,u_1)\circ (G'_2,u_2)$ and $G'_2=G_2\backslash  \{x_{2,\ell_1},y_{2,\ell_1}\}$.
In this case, we have $|N_{G'_2}(v)|=|N_{G_2}(v)|-1$. Thus, by induction and
 Lemma \ref{depth-reg}(\ref{depth-reg-1}), we have
\begin{align*}
 \depth(S_H/K)&=1+\depth(S_{G'}/I_{G'})\\
 &=1+[\depth(S_{G_1}/I_{G_1})+\depth(S_{G'_2}/I_{G'_2})-2]\\
 &=\depth(S_{G_1}/I_{G_1})+[\depth(S_{G_2}/I_{G_2})-1]-1\\
 &=\depth(S_{G_1}/I_{G_1})+\depth(S_{G_2}/I_{G_2})-2.
\end{align*}
At the same time,   $G\backslash N_G[x_{2,\ell_1}]$  has one of the following forms:
\begin{enumerate}
 \item[(c)]  $G\backslash N_G[x_{2,\ell_1}]=H\sqcup H_1$, where $H=G_1\backslash \{u_1,v_1\}$;
  \item[(d)] $G\backslash N_G[x_{2,\ell_1}]$  is the disjoint union of $H$, $H_2$ and  isolated set  $\{x_{2,k_2},\dots,x_{2,k_t}\}\backslash \{u_2\}$,  where $H_2=G_2\backslash  \{x_{2,k_1},y_{2,k_1},x_{2,k_2},y_{2,k_2},\dots,x_{2,k_t},y_{2,k_t}\}$.
  \end{enumerate}
Applying the analysis as   $(a)$ and $(b)$ of in  case (1) above, we obtain
\[
 \depth(S_G/J)=\depth(S_{G_1}/I_{G_1})+\depth(S_{G_2}/I_{G_2})-2.
 \]
In summary,  the desired result follows from  Lemmas 1.2(1),   \ref{exact}(\ref{exact-2}), \ref{decomposition}  and  exact sequence (2).
\end{proof}

	\begin{Example}
	The following are two examples that satisfy the two conditions in Theorem \ref{thm:depth_CM_CM_circ}, respectively.
	
  \begin{figure}[!htb]
                \begin{minipage}{.4\linewidth}
                    \centering
                    \begin{tikzpicture}[thick, scale=1.0, every node/.style={scale=0.98}]]
                       \draw[solid](0,2)--(0,0);
                    \draw[solid](0,2)--(2,0);

                     \draw[solid](1,2)--(1,0);

                       \draw[solid](2,2)--(2,0);

                      \shade [shading=ball, ball color=black] (0,2) circle (.07) node [above] {\scriptsize$x_{1,1}$};
                      \shade [shading=ball, ball color=black] (1,2) circle (.07) node [above] {\scriptsize$u_1$};
                      \shade [shading=ball, ball color=black] (2,2) circle (.07) node [above] {\scriptsize$x_{1,3}$};

                       \shade [shading=ball, ball color=black] (0,0) circle (.07) node [below] {\scriptsize$y_{1,1}$};
                       \shade [shading=ball, ball color=black] (1,0) circle (.07) node [below] {\scriptsize$v_1$};
                       \shade [shading=ball, ball color=black] (2,0) circle (.07) node [below] {\scriptsize$y_{1,3}$};

                    \end{tikzpicture}
                    \centerline{$G_1$}
                \end{minipage}\hfill
         \begin{minipage}{.1\linewidth}
  \centering
             \begin{tikzpicture}[thick, scale=1.0, every node/.style={scale=0.98}]]
               \draw[solid](0,2)--(0,0);

                \shade [shading=ball, ball color=black] (0,2) circle (.07) node [above] {\scriptsize$u_2$};

                \shade [shading=ball, ball color=black] (0,0) circle (.07) node [below] {\scriptsize$v_2$};

             \end{tikzpicture}
                    \centerline{$G_2$}
        \end{minipage}\hfill
      \begin{minipage}{.1\linewidth}
      \centering
                    \begin{tikzpicture}[thick, scale=1.0, every node/.style={scale=0.98}]]
                   \draw[->](0,1)--(1.5,1);
                    \end{tikzpicture}
        \end{minipage}\hfill
         \begin{minipage}{.4\linewidth}
                    \centering
                    \begin{tikzpicture}[thick, scale=1.0, every node/.style={scale=0.98}]]
                        \draw[solid](0,2)--(0,0);
                        \draw[solid](0,2)--(2,0);

                        \draw[solid](2,2)--(2,0);

                         \shade [shading=ball, ball color=black] (0,2) circle (.07) node [above] {\scriptsize$x_{1,1}$};
                        \shade [shading=ball, ball color=black] (2,2) circle (.07) node [above] {\scriptsize$x_{1,3}$};

                        \shade [shading=ball, ball color=black] (0,0) circle (.07) node [below] {\scriptsize$y_{1,1}$};
                       \shade [shading=ball, ball color=black] (1,0) circle (.07) node [below] {\scriptsize$v=v_1$};
                       \shade [shading=ball, ball color=black] (2,0) circle (.07) node [below] {\scriptsize$y_{1,3}$};

                    \end{tikzpicture}
                    \centerline{$G_1 \circ_v G_2$}
        \end{minipage}\hfill
                \caption{}
            \end{figure}
            \begin{figure}[!htb]
                \begin{minipage}{0.3\linewidth}
                     \centering
                    \begin{tikzpicture}[thick, scale=1.0, every node/.style={scale=0.98}]]
                        \draw[solid](0,2)--(0,0);
                      \draw[solid](0,2)--(1,0);

                        \draw[solid](1,2)--(1,0);

                         \shade [shading=ball, ball color=black] (0,2) circle (.07) node [above] {\scriptsize$x_{1,1}$};
                         \shade [shading=ball, ball color=black] (1,2) circle (.07) node [above] {\scriptsize$u_1$};

                      \shade [shading=ball, ball color=black] (0,0) circle (.07) node [below] {\scriptsize$y_{1,1}$};
                      \shade [shading=ball, ball color=black] (1,0) circle (.07) node [below] {\scriptsize$v_1$};

                    \end{tikzpicture}
                    \subcaption*{$G_1$}
                \end{minipage}\hfill
              \begin{minipage}{0.2\linewidth}
                    \centering
                    \begin{tikzpicture}[thick, scale=1.0, every node/.style={scale=0.98}]]
                        \draw[solid](0,2)--(0,0);
                       \draw[solid](0,2)--(2,0);

                         \draw[solid](1,2)--(1,0);
                       \draw[solid](1,2)--(2,0);

                       \draw[solid](2,2)--(2,0);

                       \shade [shading=ball, ball color=black] (0,2) circle (.07) node [above] {\scriptsize$x_{2,1}$};
                         \shade [shading=ball, ball color=black] (1,2) circle (.07) node [above] {\scriptsize$x_{2,2}$};
                         \shade [shading=ball, ball color=black] (2,2) circle (.07) node [above] {\scriptsize$u_2$};

                         \shade [shading=ball, ball color=black] (0,0) circle (.07) node [below] {\scriptsize$y_{2,1}$};
                      \shade [shading=ball, ball color=black] (1,0) circle (.07) node [below] {\scriptsize$y_{2,2}$};
                      \shade [shading=ball, ball color=black] (2,0) circle (.07) node [below] {\scriptsize$v_2$};
                    \end{tikzpicture}
                    \centerline{$G_2$}
              \end{minipage}\hfill
              \begin{minipage}{.1\linewidth}
                    \centering
                    \begin{tikzpicture}[thick, scale=1.0, every node/.style={scale=0.98}]]
                   \draw[->](0,1)--(1.5,1);
                    \end{tikzpicture}

             \end{minipage}\hfill
               \begin{minipage}{.4\linewidth}
                     \centering
                    \begin{tikzpicture}[thick, scale=1.0, every node/.style={scale=0.98}]]
                        \draw[solid](0,2)--(0,0);
                      \draw[solid](0,2)--(1.4,0);

                      \draw[solid](0.7,2)--(0.7,0);
                      \draw[solid](0.7,2)--(1.4,0);

                      \draw[solid](2.1,2)--(1.4,0);

                      \draw[solid](2.1,2)--(2.1,0);

                     \shade [shading=ball, ball color=black] (0,2) circle (.07) node [above] {\scriptsize$x_{2,1}$};
                        \shade [shading=ball, ball color=black] (0.7,2) circle (.07) node [above] {\scriptsize$x_{2,2}$};
                        \shade [shading=ball, ball color=black] (2.1,2) circle (.07) node [above] {\scriptsize$x_{1,1}$};

                       \shade [shading=ball, ball color=black] (0,0) circle (.07) node [below] {\scriptsize$y_{2,1}$};
                       \shade [shading=ball, ball color=black] (0.7,0) circle (.07) node [below] {\scriptsize$y_{2,2}$};
                       \shade [shading=ball, ball color=black] (1.4,0) circle (.07) node [below] {\scriptsize$v$};
                       \shade [shading=ball, ball color=black] (2.1,0) circle (.07) node [below] {\scriptsize$y_{1,1}$};

                    \end{tikzpicture}
                     \centerline{$G_1 \circ_v G_2$}
                \end{minipage}\hfill
                \caption{}
            \end{figure}

	Let $G=G_1 \circ G_2$. In Figure $3$, $G$ is the disjoint union of an isolated vertice $v$ and a path of length   $3$ with vertice set $\{x_{1,1},y_{1,1},y_{1,3},x_{1,3}\}$, thus
$\depth(S_{G}/I_{G})=3$ by Lemma \ref{path}.
In Figure $4$, let  $J=(x_{N_G(x_{2,1})})+I_{G \backslash N_G[x_{2,1}]}$, $K=(x_{2,1})+I_{G \backslash x_{2,1}}$. In this case, $\depth(S_G/J)=1+\depth(S_{G\backslash N_G[x_{2,1}]}/I_{G\backslash N_G[x_{2,1}]})=3$ and  $\depth(S_G/K)=3$ by Lemma \ref{path}. It follows from Lemmas $\ref{lem:direct_sum}(1)$, $\ref{exact}$, $\ref{decomposition}$ and the exact sequence $(\ref{eqn:SES-2})$ that $\depth(S_G/I_{G})=3$.	
\end{Example}

\begin{Theorem}\label{yang}
Let $G=(G_1,u_1) \circ (G_2,u_2)$, where each $G_i$ is a C-M bipartite graph with a leaf $u_i$.  Let $N_{G_i}(u_i)=\{v_i\}$.
If  $t=|\{i:\vartheta(G_i \backslash v_i) \neq \vartheta(G_i)\}|$, then
 \[
 \reg(S_G/I_G)=\reg(S_{G_1}/I_{G_1})+\reg(S_{G_2}/I_{G_2})-t.
 \]
\end{Theorem}
\begin{proof}
First,  $t \in \{0,1,2\}$ by the definition of $t$. Let $V_i$ be the vertex set of $G_i$ and  $V_i=X_i \sqcup Y_i$ be a bipartition of $V_i$ with $X_i=\{x_{i,1},x_{i,2},\dots,x_{i,n_i}\}$, $Y_i=\{y_{i,1},y_{i,2},\dots,y_{i,n_i}\}$ for $i\in[2]$.
By symmetry, let $N_{G_1}(v_1)=\{x_{1,i_1},x_{1,i_2},\dots,x_{1,i_m}\}$ and $N_{G_2}(v_2)=\{y_{2,j_1},y_{2,j_2},\dots,y_{2,j_t}\}$, where $u_1=x_{1,i_m}$, $u_2=y_{2,j_1}$,
 $1 \leq i_1 <i_2< \dots <i_m \leq n_1$ and $1 \leq j_1 <j_2< \dots <j_t \leq n_2$.  Suppose that $v_1$ and $v_2$ are identified as $v$ in $G$ by the $\circ$ operation and
$ N_{G_2}(y_{2,j_t})=\{x_{2,h_1},x_{2,h_2},\dots,x_{2,h_s}\}$ with $1 \leq h_1 <h_2< \dots <h_s = j_t$.
We divide into the following two cases:

(I) If $t=2$, then $\vartheta(G_i \backslash v_i) \neq \vartheta(G_i)$ for all $i \in [2]$. Now we prove  the formulas for the  regularity  of $S_G/I_G$ by induction on $\deg_{G_2}(v_2)$.
If $\deg_{G_2}(v_2)=1$, then $G=(G_1,u_1) \circ (G_2,u_2)$ is the disjoint union of $G_1 \backslash u_1$ and $G_2 \backslash \{u_2, v_2\}$.
If $G_2 \backslash \{u_2, v_2\}=\emptyset$, then the desired result follows from Theorem \ref{V(G_2)=1}. Now, we assume that $G_2 \backslash \{u_2, v_2\}\ne \emptyset$. In this case,
 $G_2 \backslash \{u_2, v_2\}$ is a C-M bipartite graph  by Lemma  \ref{cm subgraph}(\ref{cm subgraph-1}). It follows from Lemmas \ref{sum}(\ref{sum-1}), \ref{depth-reg}(\ref{depth-reg-2}) and \ref{V(G_2)=1} that
\begin{align*}
\reg(S_G/I_G)&=\reg(S_{G_1\backslash u_1}/I_{G_1\backslash u_1})+\reg(S_{G_2\backslash \{u_2,v_2\}}/I_{G_2\backslash \{u_2,v_2\}})\\
 &=(\reg(S_{G_1}/I_{G_1})-1)+\vartheta(G_2 \backslash \{u_2,v_2\})\\
 &=(\reg(S_{G_1}/I_{G_1})-1)+\vartheta(G_2 \backslash v_2)\\
 &=(\reg(S_{G_1}/I_{G_1})-1)+(\vartheta(G_2)-1)\\
 &=\reg(S_{G_1}/I_{G_1})+\reg(S_{G_2}/I_{G_2})-2.
\end{align*}
Assume that  $\deg_{G_2}(v_2)\geq 2$ and  the regularity statement holds for $\deg_{G_2}(v_2)-1$. In this case, let $w=y_{2,j_t}$, then  $x_{2,j_t}$ is a leaf of $G_2$. Choose $J=(N_G(w))+I_{G \backslash N_G[w]}$,  $K=(w)+I_{G \backslash w}$. thus $G\backslash w$ is the disjoint union of $G_1 \circ (G_2\backslash \{x_{2,j_t},w\})$ and isolated vertice $x_{2,j_t}$. Let $G'_2=G_2\backslash \{x_{2,j_t},w\}$, then $\deg_{G'_2}(v_2)= \deg_{G_2}(v_2)-1$. By the induction hypothesis, we have
\begin{align*}
\reg(S_G/K)&=\reg(S_{G\backslash w}/I_{G\backslash w})=\reg(S_{G_1 \circ G'_2}/I_{G_1 \circ G'_2})\\
 &=\reg(S_{G_1}/I_{G_1})+\reg(S_{G'_2}/I_{G'_2})-2\\
 &=\reg(S_{G_1}/I_{G_1})+\reg(S_{G_2}/I_{G_2})-2.
\end{align*}
where the last equality holds because of $t=2$ and  Lemma \ref{reg of S/J}(2).

In order to  compute  $\reg(S_G/J)$, we consider the induced subgraph $G \backslash N_G[w]$  of $G$.
We distinguish into the following two cases:

(a) If $N_{G_2}(w) =X_2$, then  $G \backslash N_G[w]$ is the disjoint union of $G_1 \backslash \{u_1,v_1\}$ and isolated set $Y_2\backslash \{u_2,w\}$, and  $G_1 \backslash \{u_1,v_1\}$ is a C-M bipartite graph  by Lemma  \ref{cm subgraph}(\ref{cm subgraph-1}).   It follows that
\begin{align*}
\reg(S_G/J)&=\reg(S_{G \backslash N_G[w]}/I_{G \backslash N_G[w]})=\reg(S_{G_1 \backslash \{u_1,v_1\}}/I_{G_1 \backslash \{u_1,v_1\}})\\
 &=\vartheta(G_1\backslash \{u_1,v_1\})=\vartheta(G_1\backslash v_1)=\reg(S_{G_1}/I_{G_1})-1.
\end{align*}

(b) If $N_{G_2}(w)  \subsetneq X_2$, then  $G \backslash N_G[w]$ is the disjoint union of $G_1 \backslash \{u_1,v_1\}$,
$H$ and the isolated set $\{y_{2,h_1},y_{2,h_2},\dots,y_{2,h_{s-1}}\} \backslash \{u_2\}$, where $H=G_2 \backslash \{x_{2,h_1},y_{2,h_1},\dots, x_{2,h_s},y_{2,h_s}\}$.
By Lemmas \ref{depth-reg}(\ref{depth-reg-2}) and \ref{reg of S/J}(1), we obtain
\begin{align*}
\reg(S_G/J)&=\reg(S_{G \backslash N_G[w]}/I_{G \backslash N_G[w]})=\reg(S_{G_1 \backslash \{u_1,v_1\}}/I_{G_1 \backslash \{u_1,v_1\}})+\reg(S_H/I_H)\\
  &=\vartheta(G_1 \backslash v_1)+\reg(S_H/I_H)\\
 &=(\reg(S_{G_1}/I_{G_1})-1)+\reg(S_H/I_H)\\
 &\leq (\reg(S_{G_1}/I_{G_1})-1)+(\reg(S_{G_2}/I_{G_2})-2)\\
  &=\reg(S_{G_1}/I_{G_1})+\reg(S_{G_2}/I_{G_2})-3.
\end{align*}
where the penultimate inequality holds by Lemma \ref{reg of S/J}(1).
Using Lemmas \ref{lem:direct_sum}(2), \ref{exact}(\ref{exact-4}) and   \ref{decomposition} to the  exact sequence (\ref{eqn:SES-2}),
we get the desired  regularity results.

(II) If $t \leq 1$. In this case, we choose  $J=(N_G(v))+I_{G \backslash N_G[v]}$,  $K=(v)+I_{G \backslash v}$. Thus $G \backslash N_G[v]$ is the disjoint union of $G'_1, G'_2$
and the  isolated set $\{y_{1,i_1},\dots,y_{1,i_{m-1}},x_{2,j_2},\\
\dots,x_{2,j_t}\}$, where $G'_1=G_1\backslash \{x_{1,i_1},y_{1,i_1},\dots,x_{1,i_m},y_{1,i_m}\}$ $G'_2=G_2\backslash \{x_{2,j_1},y_{2,j_1},\dots,x_{2,j_t},\\
y_{2,j_t}\}$. By Lemmas  \ref{cm subgraph}(\ref{cm subgraph-2}) and \ref{depth-reg}(\ref{depth-reg-2}), we get
\begin{align*}
\reg(S_G/J)&=\reg(S_{G \backslash N_G[v]}/I_{G \backslash N_G[v]})=\reg(S_{G'_1}/I_{G'_1})+\reg(S_{G'_2}/I_{G'_2})\\
 &\leq (\reg(S_{G_1}/I_{G_1})-1)+(\reg(S_{G_2}/I_{G_2})-1)\\
  &=\reg(S_{G_1}/I_{G_1})+\reg(S_{G_2}/I_{G_2})-2.
\end{align*}
where the penultimate inequality holds by Lemma \ref{reg of S/J-1}.
On the other hand, $G\backslash v$ is the disjoint union  of $G_1\backslash \{u_1,v_1\}$ and $G_2\backslash \{u_2,v_2\}$. By Lemmas  \ref{cm subgraph}(\ref{cm subgraph-1}) and \ref{depth-reg}(\ref{depth-reg-2}), we get
 \begin{align*}
\reg(S_G/K)&=\reg(S_{G \backslash v}/I_{G \backslash v})\\
 &=\reg(S_{G_1 \backslash \{u_1,v_1\}}/I_{G_1 \backslash \{u_1,v_1\}})+\reg(S_{G_2 \backslash \{u_2,v_2\}}/I_{G_2 \backslash \{u_2,v_2\}})\\
  &=\vartheta(G_1 \backslash v_1)+\vartheta(G_2 \backslash v_2).
\end{align*}
Thus, if $t=0$, then $\vartheta(G_i \backslash v_i)=\vartheta(G_i)$ for all $i\in [2]$, Thus
$\reg(S/K)=\reg(S_{G_1}/I_{G_1})+\reg(S_{G_2}/I_{G_2})$. If $t=1$, then $\vartheta(G_1 \backslash v_1)=\vartheta(G_1)$ and $\vartheta(G_1 \backslash v_1)\ne\vartheta(G_1)$,
or vice versa. Thus $\reg(S/K)=\reg(S_{G_1}/I_{G_1})+\reg(S_{G_2}/I_{G_2})-1$.

 Applying Lemma \ref{lem:direct_sum}, \ref{exact}(\ref{exact-4}) and  \ref{decomposition} to the  exact sequence (\ref{eqn:SES-2}), we obtain that $\reg(S_G/I_G)=\reg(S_{G_1}/I_{G_1})+\reg(S_{G_2}/I_{G_2})-t$.
\end{proof}

\begin{Theorem}
Let $G=(G_1,u_1) * (G_2,u_2)$, where each $G_i$ is a C-M bipartite graph with a leaf $u_i$. Let $N_{G_i}(u_i)=\{v_i\}$.   Let $t=|\{i:\vartheta(G_i \backslash v_i) \neq \vartheta(G_i)\}|$. Then
\[\reg(S_G/I_G)=\reg(S_{G_1}/I_{G_1})+\reg(S_{G_2}/I_{G_2})-s\]
where $s= \begin{cases}
	0, & \text{if $t \leq 1$},\\
	1, & \text{if $t=2$.}
\end{cases}$

\end{Theorem}
\begin{proof} First,  $t \in \{0,1,2\}$ by the definition of $t$.
Suppose  $u_1$ and $u_2$ are identified as $u$ in $G$ by the $*$ operation. Let $N_{G_2}(v_2)=\{y_{j_1},y_{j_2},\dots,y_{j_m}\}$ with $u_2=y_{j_1}$, where $1 \leq j_1<\dots<j_m\leq n_2$.  Then $G \backslash N_G[v_2]$ is the disjoint union of $G_1 \backslash u_1$, $G_2\backslash \{x_{j_1},y_{j_1},\dots,x_{j_m},y_{j_m}\}$ and isolated set $\{x_{j_2},\dots,x_{j_m}\}$, and $G \backslash v_2$ is the disjoint union of $G_1$ and $G_2 \backslash \{u_2,v_2\}$. We divide into the following two cases:

(1) If $t \leq 1$, then $\vartheta(G_i \backslash v_i) = \vartheta(G_i)$ for some $i\in[2]$. By symmetry, we  assume $\vartheta(G_2 \backslash v_2) = \vartheta(G_2)$. In this case, we choose $J=(N_G(v_2))+I_{G \backslash N_G[v_2]}$,  $K=(v_2)+I_{G \backslash v_2}$. Let $H=G_2\backslash \{x_{j_1},y_{j_1},\dots,x_{j_m},y_{j_m}\}$, then  by Lemmas \ref{lem:induced-reg},  \ref{reg of S/J-1}(1) and  \ref{depth-reg}(\ref{depth-reg-2}), we have
\begin{align*}
\reg(S_G/J)&=\reg(S_{G \backslash N_G[v_2]}/I_{G \backslash N_G[v_2]})=\reg(S_{G_1\backslash u_1}/I_{G_1\backslash u_1})+\reg(S_{H}/I_{H})\\
 &\leq \reg(S_{G_1}/I_{G_1})+(\reg(S_{G_2}/I_{G_2})-1)\\
  &=\reg(S_{G_1}/I_{G_1})+\reg(S_{G_2}/I_{G_2})-1,\\
\reg(S_G/K)&=\reg(S_{G \backslash v_2}/I_{G \backslash v_2})=\reg(S_{G_1}/I_{G_1})+\reg(S_{G_2 \backslash \{u_2,v_2\}}/I_{G_2 \backslash \{u_2,v_2\}})\\
 &=\reg(S_{G_1}/I_{G_1})+\vartheta(G_2 \backslash \{v_2\})\\
  &=\reg(S_{G_1}/I_{G_1})+\reg(S_{G_2}/I_{G_2}).
\end{align*}
 Applying Lemmas \ref{lem:direct_sum}, \ref{exact}(\ref{exact-4}) and  \ref{decomposition}(\ref{decomposition-2}) to the  exact sequence (\ref{eqn:SES-2}), we obtain the expected results.

 (2) If $t=2$, then $\vartheta(G_i \backslash v_i) \neq \vartheta(G_i)$ for all $i
 \in [2]$.  In this case, we choose  $J=(N_G(v_2))+I_{G \backslash N_G[v_2]}$,  $K=(v_2)+I_{G \backslash v_2}$. Thus, by Theorem \ref{V(G_2)=1}, we have
 \begin{align*}
\reg(S_G/J)&=\reg(S_{G \backslash N_G[v_2]}/I_{G \backslash N_G[v_2]})=\reg(S_{G_1\backslash u_1}/I_{G_1\backslash u_1})+\reg(S_{H}/I_{H})\\
 &= (\reg(S_{G_1}/I_{G_1})-1)+(\reg(S_{G_2}/I_{G_2})-1)\\
  &=\reg(S_{G_1}/I_{G_1})+\reg(S_{G_2}/I_{G_2})-2.
\end{align*}
where the penultimate equality holds  by the proof of Lemma \ref{reg of S/J}(2).
 Meanwhile, by  Lemmas  \ref{cm subgraph}(\ref{cm subgraph-1}) and  \ref{depth-reg}(\ref{depth-reg-2}), we have
 \begin{align*}
\reg(S_G/K)&=\reg(S_{G \backslash v_2}/I_{G \backslash v_2})=\reg(S_{G_1}/I_{G_1})+\reg(S_{G_2 \backslash \{u_2,v_2\}}/I_{G_2 \backslash \{u_2,v_2\}})\\
   &=\reg(S_{G_1}/I_{G_1})+\vartheta(G_2 \backslash v_2)=\reg(S_{G_1}/I_{G_1})+\vartheta(G_2)-1\\
  &=\reg(S_{G_1}/I_{G_1})+\reg(S_{G_2}/I_{G_2})-1.
\end{align*}
Again applying Lemmas \ref{lem:direct_sum}, \ref{exact}(\ref{exact-4}) and  \ref{decomposition} to the  exact sequence (\ref{eqn:SES-2}), we obtain the  desired result.
\end{proof}

For two graph $G_1$ and $G_2$, let  their clique sum $G_1\cup_v  G_2$ be a union of graphs $G_1$ and $G_2$ such that $V(G_1)\cap V(G_2)=\{v\}$.
\begin{Lemma}
\label{Lemma c-m and P *}
Let $G=G_1 \cup _{u} P_2$  be the  clique sum of a C-M bipartite graph $G_1$ and a path $P_2$ with vertex set $\{u,v_2\}$, where  $u$ is a leaf of $G_1$ and $N_{G_1}(u)=\{v_1\}$.
 Then
\[\depth(S_G/I_G)=\depth(S_{G_1}/I_{G_1})\]
\end{Lemma}
\begin{proof}
If $\deg_{G_1}(v_1)=1$, then $G$ is the disjoint union of $G_1\backslash \{u_1,v_1\}$ and a path of length $3$ with vertice set $\{v_1,v_2,u\}$.  Thus, by  Lemma \ref{depth-reg}(\ref{depth-reg-1}) and Lemma \ref{path}, we get
$\depth(S_G/I_G)=1+\frac{|V(G_1)|-2}{2}=\depth(S_{G_1}/I_{G_1})$, since $G_1\backslash \{u,v_1\}$ is a C-M bipartite graph  by Lemma  \ref{cm subgraph}(\ref{cm subgraph-1}).

If $\deg_{G_1}(v_1) \ge 2$, then we choose  $J=(N_G(v_2))+I_{G \backslash N_G[v_2]}$,  $K=(v_2)+I_{G \backslash v_2}$. In this case, $G \backslash v_2 =G_1$ and  $G \backslash N_G[v_2]=G_1 \backslash u$. Then by Lemma \ref{subgraph-depth}, we obtain that
\begin{align*}
\depth(S_G/J)&=1+\depth(S_{G \backslash N_G[v_2]}/I_{G \backslash N_G[v_2]})\\
&=1+\depth(S_{G_1\backslash u}/I_{G_1\backslash u})\\
&=1+(\depth(S_{G_1}/I_{G_1})-1)\\
&=\depth(S_{G_1}/I_{G_1}),\\
\depth(S_G/K)&=\depth(S_{G\backslash v_2}/I_{G\backslash v_2})=\depth(S_{G_1}/I_{G_1}).
\end{align*}
Using Lemmas Lemmas \ref{lem:direct_sum}, \ref{exact}(\ref{exact-2}) and \ref{decomposition}  to the  exact sequence (\ref{eqn:SES-2}), we get $\depth(S_G/I_G)=\depth(S_{G_1}/I_{G_1})$.
\end{proof}

\begin{Theorem}
 \label{thm:depth_CM_CM_*}
Let $G=(G_1,u_1) * (G_2,u_2)$, where each $G_i$ is a  C-M bipartite graph  with  a leaf $u_i$. Let $N_{G_i}(u_i)=\{v_i\}$. Then
\[
\depth(S_G/I_{G})=\depth(S_{G_1}/I_{G_1})+\depth(S_{G_2}/I_{G_2})-1,
\]
\end{Theorem}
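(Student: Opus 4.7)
\medskip

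The plan is to mirror the approach used in the regularity version of this theorem (the previous theorem), but applying the depth-flavored lemmas throughout. That is, I will pick the splitting vertex $v_2$, form the ideals
\[
J=(N_G(v_2))+I_{G\setminus N_G[v_2]}, \qquad K=(v_2)+I_{G\setminus v_2},
\]
and apply Lemma \ref{decomposition} together with the short exact sequence (\ref{eqn:SES-2}). By Definition \ref{circ_*_operations}, we may assume $\deg_{G_i}(v_i)\ge 2$, so the troublesome $\deg=1$ cases do not arise. Denote the common vertex $u_1=u_2=u$ in $G$ and adopt the C-M labeling of Theorem \ref{cm bipart} for $G_2$ so that $v_2=x_{j_1}$, $u_2=y_{j_1}$ and $N_{G_2}(v_2)=\{y_{j_1},\ldots,y_{j_m}\}$.

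Next I will compute $\depth(S_G/K)$ and $\depth(S_G/J)$ separately. For $K$, note that $V(G_1)\cap V(G_2)=\{u\}$ and $v_2\in V(G_2)\setminus\{u\}$, so $G\setminus v_2=G_1\sqcup(G_2\setminus\{u_2,v_2\})$; by Lemma \ref{cm subgraph}(\ref{cm subgraph-1}) the second piece is C-M bipartite, and Lemmas \ref{sum}(\ref{sum-2}) and \ref{depth-reg}(\ref{depth-reg-1}) give $\depth(S_G/K)=\depth(S_{G_1}/I_{G_1})+\depth(S_{G_2}/I_{G_2})-1$. For $J$, we have $G\setminus N_G[v_2]=(G_1\setminus u)\sqcup(G_2\setminus N_{G_2}[v_2])$. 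The key structural point — the main obstacle — is to check that $G_2\setminus N_{G_2}[v_2]$ decomposes as $H\sqcup\{x_{j_2},\ldots,x_{j_m}\}$ where $H=G_2\setminus\{x_{j_1},y_{j_1},\ldots,x_{j_m},y_{j_m}\}$ is C-M bipartite and each $x_{j_\ell}$ with $\ell\ge 2$ is isolated in $G_2\setminus N_{G_2}[v_2]$. The isolation is forced by Theorem \ref{cm bipart}(\ref{cm bipart-3}): any surviving neighbor $y_k$ of $x_{j_\ell}$ would satisfy $k>j_\ell$, $k\notin\{j_1,\ldots,j_m\}$, but then transitivity applied to $\{x_{j_1},y_{j_\ell}\}$ and $\{x_{j_\ell},y_k\}$ would force $y_k\in N_{G_2}(v_2)$, a contradiction. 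The C-M property of $H$ is Lemma \ref{cm subgraph}(\ref{cm subgraph-2}).

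Combining these, Lemma \ref{depth-reg}(\ref{depth-reg-1}) yields $\depth(S_{H}/I_H)=n_2-m$ and $\depth(S_{G_2\setminus N_{G_2}[v_2]}/I_{G_2\setminus N_{G_2}[v_2]})=(n_2-m)+(m-1)=\depth(S_{G_2}/I_{G_2})-1$, while Lemma \ref{subgraph-depth} (using $\deg_{G_1}(v_1)\ge 2$) gives $\depth(S_{G_1\setminus u}/I_{G_1\setminus u})=\depth(S_{G_1}/I_{G_1})-1$. Hence
\[
\depth(S_G/J)=1+\depth(S_{G_1}/I_{G_1})-1+\depth(S_{G_2}/I_{G_2})-1=\depth(S_{G_1}/I_{G_1})+\depth(S_{G_2}/I_{G_2})-1.
\]
Setting $d:=\depth(S_{G_1}/I_{G_1})+\depth(S_{G_2}/I_{G_2})-1$, we now have $\depth(S_G/J)=\depth(S_G/K)=d$, and by Lemma \ref{decomposition}(\ref{decomposition-3}), $\depth(S_G/(J+K))=d-1$. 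Applying Lemmas \ref{lem:direct_sum}(1) and \ref{exact}(\ref{exact-2}) to the sequence (\ref{eqn:SES-2}), the inequality $\depth(N)=d\neq d-1=\depth(P)$ triggers the equality clause, giving $\depth(S_G/I_G)=\min\{d,d\}=d$, as desired.
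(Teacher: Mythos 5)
Your argument in the case $\deg_{G_1}(v_1)\ge 2$ and $\deg_{G_2}(v_2)\ge 2$ is correct and is essentially the paper's own proof of its case (2), only with the splitting vertex chosen in $G_2$ rather than in $G_1$ (a symmetric choice): your computations $\depth(S_G/K)=\depth(S_G/J)=\depth(S_{G_1}/I_{G_1})+\depth(S_{G_2}/I_{G_2})-1$ agree with the paper's, your isolation argument for $x_{j_2},\dots,x_{j_m}$ via Theorem \ref{cm bipart}(\ref{cm bipart-3}) is a correct (and more explicit) version of what the paper merely asserts, and the concluding step via Lemmas \ref{decomposition}(\ref{decomposition-3}), \ref{lem:direct_sum}(1) and \ref{exact}(\ref{exact-2}) applied to the sequence (\ref{eqn:SES-2}) is exactly the paper's. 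One cosmetic point: when $N_{G_2}(v_2)=Y_2$ the graph $H$ is empty, so Lemma \ref{depth-reg} does not literally apply to it, but your count $n_2-1$ for $G_2\setminus N_{G_2}[v_2]$ is still correct; the paper separates this subcase (its cases (i) and (ii)), and you should at least note it.

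The one genuine discrepancy is your dismissal of the cases $\deg_{G_i}(v_i)=1$ by appeal to Definition \ref{circ_*_operations}. The paper does not in fact respect that restriction: Theorem \ref{thm:depth_CM_CM_circ} contains an explicit case ``$\deg_{G_i}(v_i)=1$ for all $i\in[2]$'', and the paper's own proof of this theorem devotes its case (1) to $\deg_{G_2}(v_2)=1$ (equivalently $\deg_{G_1}(v_1)=1$, by symmetry), so the statement is intended to cover these graphs and your proof as written does not. The exclusion is load-bearing rather than cosmetic, since your appeal to Lemma \ref{subgraph-depth} for $G_1\setminus u$ requires $\deg_{G_1}(v_1)\ge 2$, and the structure of $G\setminus N_G[v_2]$ changes when $\deg_{G_2}(v_2)=1$. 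The missing case is easy and is exactly what the paper does: if, say, $\deg_{G_2}(v_2)=1$, then $G_2=P_2\sqcup(G_2\backslash\{u_2,v_2\})$, hence $G=(G_1\cup_{u_1}P_2)\sqcup(G_2\backslash\{u_2,v_2\})$, and Lemmas \ref{sum}(\ref{sum-2}), \ref{depth-reg}(\ref{depth-reg-1}) and \ref{Lemma c-m and P *} give $\depth(S_G/I_G)=\depth(S_{G_1}/I_{G_1})+\bigl(\depth(S_{G_2}/I_{G_2})-1\bigr)$, as required. Adding this paragraph would make your proof complete for the theorem in the generality in which the paper states and uses it.
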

\begin{proof}
For any $i\in[2]$, let $V(G_i)=X_{i} \sqcup Y_{i}$ with $X_{i}=\{x_{i,1},\dots,x_{i,n_i}\}$, $Y_{i}=\{y_{i,1},\dots,y_{i,n_i}\}$ and $u_i=x_{i,j_i}$ for some $j_i \in [n_i]$. Then
$N_{G_i}(u_i)=\{y_{i,j_i}\}$. Let $v_i=y_{i,j_i}$ and  $N_{G_1}(v_1)=\{x_{1,k_1},\dots,x_{1,k_t}\}$ with $1\leq k_1<\dots <k_t=j_1$. We divide into the following two cases:

(1) If $\deg_{G_i}(v_i)=1$ for some $i \in [2]$, then we  assume   $\deg_{G_2}(v_2) = 1$ by symmetry. Then $G_2=P_2\sqcup(G_2\backslash  \{u_2,v_2\})$, which implies $G=(G_1 \cup _{u_1} P_2)\sqcup (G_2\backslash  \{u_2,v_2\})$. By Lemmas \ref{sum}(\ref{sum-2}), \ref{depth-reg}(\ref{depth-reg-1}), and \ref{Lemma c-m and P *}, we have
\begin{align*}
\depth(S_G/I_{G})&=\depth(S_{G_1 \cup _{u_1} P_2}/I_{G_1 \cup _{u_1} P_2})+\depth(S_{G_2\backslash  \{u_2,v_2\}}/I_{G_2\backslash  \{u_2,v_2\}})\\
&=\depth(S_{G_1}/I_{G_1})+\depth(S_{G_2}/I_{G_2})-1.
\end{align*}

(2) If $\deg_{G_i}(v_i) \geq 2$ for all  $i \in [2]$, then we choose $J=(N_G(v_1))+I_{G \backslash N_G[v_1]}$, $K=(v_1)+I_{G \backslash v_1}$. In this case,  $G\backslash v_1$ is the disjoint union of $G_2$ and $G_1\backslash \{u_1,v_1\}$. Thus  by Lemma \ref{depth-reg}(\ref{depth-reg-1}), we get
\begin{align*}
\depth(S_G/K)&=\depth(S_{G_2}/I_{G_2})+\depth(S_{G_1 \backslash \{u_1,v_1\}}/I_{G_1 \backslash \{u_1,v_1\}})\\
&=\depth(S_{G_2}/I_{G_2})+\depth(S_{G_1}/I_{G_1})-1.
\end{align*}
In order to compute the depth of $S_G/J$, we distinguish into the following two cases:

(i) If $N_G(v_1)=X_1$, then $G \backslash N_G[v_1]$ is the disjoint union of  $G_2 \backslash u_2$ and isolated set $Y_1 \backslash v_1$. By Lemmas  \ref{sum} (\ref{sum-2}), \ref{depth-reg}(\ref{depth-reg-1}) and \ref{subgraph-depth},  we have
\begin{align*}
\depth(S_G/J)&=1+\depth(S_{G \backslash N_G[v_1]}/I_{G \backslash N_G[v_1]})\\
&=1+\depth(S_{G_2\backslash u_2}/I_{G_2\backslash u_2})+(n_1-1)\\
&=\depth(S_{G_1}/I_{G_1})+\depth(S_{G_2}/I_{G_2})-1.
\end{align*}

(ii) If $N_G(v_1) \subsetneq X_1$, then $G \backslash N_G[v_1]$ is the disjoint union of $G_1 \backslash \{x_{1,k_1},y_{1,k_1},\dots, \\
x_{1,k_t}, y_{1,k_t}\}$, $G_2\backslash u_2$, and isolated set $\{y_{1,k_1},y_{1,k_2},\dots,y_{1,k_{t-1}}\}$. Note that $H=G_1 \backslash \{x_{1,k_1},\\
y_{1,k_1},\dots, x_{1,k_t}, y_{1,k_t}\}$  is a C-M bipartite graph by Lemma  \ref{cm subgraph}(\ref{cm subgraph-2}).
 Thus, by Lemmas  \ref{depth-reg}(\ref{depth-reg-1}) and
\ref{subgraph-depth}, we have
\begin{align*}
\depth(S_G/J)&=1+\depth(S_{G \backslash N_G[v_1]}/I_{G \backslash N_G[v_1]})\\
&=1+\depth(S_{H}/I_{H})+\depth(S_{G_2\backslash u_2}/I_{G_2\backslash u_2})+(t-1)\\
&= 1+\frac{|V(G_1)|-2t}{2}+[\depth(S_{G_2}/I_{G_2})-1]+(t-1)\\
&= \frac{|V(G_1)|}{2}+\depth(S_{G_2}/I_{G_2})-1\\
&=\depth(S_{G_1}/I_{G_1})+\depth(S_{G_2}/I_{G_2})-1.
\end{align*}
 Applying  Lemmas \ref{lem:direct_sum}, \ref{exact}(\ref{exact-2}) and  \ref{decomposition} to the  exact sequence (\ref{eqn:SES-2}), we obtain that
$\depth(S_G/I_{G})=\depth(S_{G_1}/I_{G_1})+\depth(S_{G_2}/I_{G_2})-1$. We finish the proof.
\end{proof}

\medskip
\hspace{-6mm} {\bf Acknowledgments}

 \vspace{3mm}
\hspace{-6mm}  This research is supported by the Natural Science Foundation of Jiangsu Province (No. BK20221353) and the
    foundation of the Priority Academic Program Development of Jiangsu Higher
    Education Institutions. The authors are grateful to the computer algebra systems CoCoA \cite{Co} for providing us with a large number of examples.

\medskip
\hspace{-6mm} {\bf Data availability statement}

\vspace{3mm}
\hspace{-6mm}  The data used to support the findings of this study are included within the article.

\medskip

\end{document}